\newtheorem*{theoA}{Theorem A}
\newtheorem*{theoB}{Theorem B}
\newtheorem*{theoC}{Theorem C}
\newtheorem*{theoD}{Theorem D}
\newtheorem*{theoE}{Theorem E}
\newtheorem*{theoF}{Theorem F}
\newtheorem*{theoG}{Theorem G}
\newtheorem*{theoH}{Theorem H}
\newtheorem*{theoI}{Theorem I}
\newtheorem*{theoJ}{Theorem J}
\newtheorem*{theoK}{Theorem K}
\newtheorem*{conjA}{Conjecture A}
\newtheorem*{conjB}{Conjecture B}
\newtheorem*{QuesA}{Question A}
\newtheorem*{QuesB}{Question B}
\newtheorem*{cor A}{Corollary A}
\newtheorem*{cor B}{Corollary B}
\newtheorem{theo}{Theorem}[section]
\newtheorem{lem}{Lemma}[section]
\newtheorem{exm}{Example}[section]
\newtheorem{rem}{Remark}[section]
\newcommand{\ol}{\overline}
\newcommand{\be}{\begin{equation}}
\newcommand{\ee}{\end{equation}}
\newcommand{\beas}{\begin{eqnarray*}}
\newcommand{\eeas}{\end{eqnarray*}}
\newcommand{\bea}{\begin{eqnarray}}
\newcommand{\eea}{\end{eqnarray}}
\numberwithin{equation}{section}
\begin{document}
\title[U\MakeLowercase{niqueness of entire function concerning derivatives}....]{\LARGE U\Large\MakeLowercase{niqueness of entire function concerning derivatives and shifts}}
\date{}
\author[S. M\MakeLowercase{ajumder} \MakeLowercase{and} N. S\MakeLowercase{arkar}]{S\MakeLowercase{ujoy} M\MakeLowercase{ajumder}$^*$ \MakeLowercase{and} N\MakeLowercase{abadwip} S\MakeLowercase{arkar}}
\address{Department of Mathematics, Raiganj University, Raiganj, West Bengal-733134, India.}
\email{sm05math@gmail.com, sjm@raiganjuniversity.ac.in}

\address{Department of Mathematics, Raiganj University, Raiganj, West Bengal-733134, India.}
\email{naba.iitbmath@gmail.com}

\renewcommand{\thefootnote}{}
\footnote{2010 \emph{Mathematics Subject Classification}: Primary 30D35, Secondary 34A20}
\footnote{\emph{Key words and phrases}: Entire function, meromorphic function, Small function, Nevanlinna theory, uniqueness, derivative.}
\setcounter{footnote}{0}
\footnote{*\emph{Corresponding Author}: Sujoy Majumder.}
\renewcommand{\thefootnote}{\arabic{footnote}}
\setcounter{footnote}{0}

\begin{abstract} In the paper, we investigate the uniqueness problem of entire function concerning its derivative and shift and obtain two results. On of our result solves the open problem posed by Majumder et al. (On a conjecture of Li and Yang, Hiroshima Math. J., 53 (2023), 199-223) and the other result improves and generalizes the recent result due to Huang and Fang (Unicity of entire functions concerning their shifts and derivatives, Comput. Methods Funct. Theory, 21 (2021), 523-532) in a large extend.
\end{abstract}

\thanks{Typeset by \AmS -\LaTeX}
\maketitle
\section{{\bf Introduction and main results}}
In the paper, we assume that the reader is familiar with standard notation and main results of Nevanlinna Theory (see \cite{WKH1, YI1}). We denote respectively by $\rho(f)$ and $\rho_2(f)$ the order and hyper-order of the meromorphic function $f$. As usual, the abbreviation CM means ``counting multiplicities'', while IM means ``ignoring multiplicities''.

\smallskip
We define the linear measure $m(E)$ and the logarithmic measure $l(E)$ respectively by
\[m(E):=\int_E dt\;\;\text{and}\;\;l(E):=\int_{E\cap [1,\infty)} \frac{d t}{t}\]
for a set $E\subset [0,\infty)$. Trivially, $l(E)\leq m(E)$. Also the logarithmic density measure is denoted and defined by
\[\text{log\;dens}\;E=\lim\limits_{r\rightarrow \infty}\frac{l(E(r))}{\log r}=\lim\limits_{r\rightarrow \infty}\frac{\int_{E(r)} (dt/t)}{\log r}\]
where $E(r)=E\cap [e,r]$ for a set $E\subset [0,\infty)$. Moreover, if $l(E)<+\infty$, then $\text{log\;dens}\; E=0$.

A meromorphic function $a$ is said to be a small function of $f$ if $T(r,a)=S(r,f)$ for all $r\not\in E\subset [0,+\infty)$ such that $m(E)<+\infty$.

\smallskip
The research on the uniqueness problem of meromorphic function sharing values or small functions with its derivatives is an active field and the study is based on the Nevanlinna value distribution theory. The research on this topic was started by Rubel and Yang \cite{RY1}. Now we state their result.

\begin{theoA} \cite{RY1} Let $f$ be a non-constant entire function and let $a_1$ and $a_2$ be two finite distinct complex numbers. If $f$ and $f^{(1)}$ share $a_1$ and $a_2$ CM, then $f\equiv f^{(1)}$.
\end{theoA}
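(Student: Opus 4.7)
The plan is to exploit the CM sharing to produce two auxiliary entire functions of slow growth and then to eliminate $f^{(1)}$ algebraically. For $j=1,2$ set
\[ g_j := \frac{f^{(1)} - a_j}{f - a_j}. \]
The CM sharing hypothesis says that the zero sets of $f - a_j$ and of $f^{(1)} - a_j$ coincide with matching multiplicities, so $g_j$ extends holomorphically across each $a_j$-point of $f$; as $f$ and $f^{(1)}$ are entire, this makes $g_j$ an entire function, and the same matching-multiplicities argument shows that $g_j$ is also zero-free. Hence $g_j = e^{h_j}$ for some entire $h_j$ (the case $f^{(1)} \equiv a_j$ is ruled out immediately, since it would force $f(z) = a_j z + c$, which cannot share the \emph{other} value CM with $f^{(1)}$).

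The next step is to check that each $g_j$ is small with respect to $f$. By the lemma on the logarithmic derivative,
\[ m(r, g_j) \;\leq\; m\!\left(r, \frac{f^{(1)}}{f - a_j}\right) + O(1) \;=\; S(r, f), \]
and the entireness of $g_j$ gives $N(r, g_j) = 0$, so $T(r, g_j) = S(r, f)$ for $j = 1, 2$.

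Now I would rewrite the definition as $f^{(1)} = g_j f + a_j(1 - g_j)$ and equate the two resulting expressions to obtain
\[ (g_1 - g_2)\, f \;=\; (a_2 - a_1) + a_1 g_1 - a_2 g_2. \]
If $g_1 \not\equiv g_2$, then $f$ is expressed as a rational combination of small functions, forcing $T(r, f) = S(r, f)$, which contradicts the non-constancy of the entire function $f$. Hence $g_1 \equiv g_2 =: g$, and the same identity collapses to $(a_1 - a_2)(1 - g) = 0$. Since $a_1 \neq a_2$, this yields $g \equiv 1$, i.e.\ $f^{(1)} \equiv f$.

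The principal delicate point is ensuring that the $g_j$ are genuinely entire and zero-free: this is exactly where the CM hypothesis is used, and without it one would be forced to carry extra counting-function terms recording the mismatch between the zero sets of $f - a_j$ and $f^{(1)} - a_j$, which could swamp the small-function estimate on $T(r, g_j)$. Once that hurdle is cleared, the remainder of the argument is a one-line elimination.
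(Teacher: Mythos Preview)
Your argument is correct and is essentially the original Rubel--Yang proof from \cite{RY1}: form the two ratios $g_j=(f^{(1)}-a_j)/(f-a_j)$, observe that CM sharing forces each $g_j$ to be entire and zero-free with $T(r,g_j)=S(r,f)$ via the logarithmic-derivative lemma (indeed $g_j=(f-a_j)'/(f-a_j)$), and then eliminate $f^{(1)}$ to conclude $g_1\equiv g_2\equiv 1$.

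Note, however, that the present paper does not supply its own proof of Theorem~A: the result is quoted as background with a citation to Rubel and Yang, and the paper's new arguments concern the much harder IM and small-function settings (Theorems~\ref{t2} and~\ref{t1}). So there is nothing here to compare your proof against beyond confirming that it matches the classical one. One small cosmetic point: in your final collapse the sign is $(a_2-a_1)(1-g)=0$ rather than $(a_1-a_2)(1-g)=0$, but this of course does not affect the conclusion.
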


This result has been generalized from sharing values CM to IM by Mues and Steinmetz \cite{MS1} and in the case when both shared values are non-zero by Gundersen \cite{GGG1}. 

\smallskip
The research in this topic has been extended in the following directions:
\begin{enumerate}
\item[(i)] One replaces the shared values by small function functions;
\item[(ii)] One replaces sharing CM by IM;
\item[(iii)] One replaces $f^{(1)}$ by $f^{(k)}$.
\end{enumerate}

For this background, we see \cite{GF1}, \cite{XHH1}, \cite{LY1}, \cite{LXY1}, \cite{MSS1}, \cite{GQ1}, \cite{SS1}, \cite{LZY1}, \cite{ZW1}.

\smallskip
In 1992, Zheng and Wang \cite{ZW1} considered shared small functions and improved Theorem A in the following manner. 
\begin{theoB} \cite[Theorem 1]{ZW1} Let $f$ be a non-constant entire function and let $a_1(\not\equiv \infty)$ and $a_2(\not\equiv \infty)$ be two distinct small functions of $f$. If $f$ and $f^{(1)}$ share $a_1$ and $a_2$ CM, then $f\equiv f^{(1)}$.  
\end{theoB}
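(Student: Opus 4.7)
The plan is to unpack the CM-sharing hypothesis into two exponential ratios, collapse them to a single master identity, and then either conclude $f\equiv f^{(1)}$ at once or force a contradiction through Nevanlinna's second main theorem for small targets. Since $f$ is entire and $f,f^{(1)}$ share $a_j$ CM, the quotient $(f^{(1)}-a_j)/(f-a_j)$ is entire and nowhere zero, so by Hadamard's theorem there exist entire functions $\alpha_1,\alpha_2$ with
\[ \frac{f^{(1)}-a_j}{f-a_j}=e^{\alpha_j}\qquad(j=1,2). \]
Eliminating $f^{(1)}$ between the two resulting identities gives the master equation
\[ f\bigl(e^{\alpha_1}-e^{\alpha_2}\bigr)=a_1(e^{\alpha_1}-1)-a_2(e^{\alpha_2}-1). \]

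First I would dispose of the case $e^{\alpha_1}\equiv e^{\alpha_2}$: the master equation degenerates to $(a_1-a_2)(e^{\alpha_1}-1)\equiv 0$, and since $a_1\not\equiv a_2$ this forces $e^{\alpha_1}\equiv 1$, whence $f^{(1)}-a_1\equiv f-a_1$ and $f\equiv f^{(1)}$, the desired conclusion. It therefore suffices to rule out the alternative $e^{\alpha_1}\not\equiv e^{\alpha_2}$.

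For the substantive case, direct manipulation of the two ratios produces the auxiliary identity
\[ e^{\alpha_1}-e^{\alpha_2}=\frac{(a_1-a_2)(f^{(1)}-f)}{(f-a_1)(f-a_2)}, \]
so in particular $f^{(1)}\not\equiv f$. Because the left-hand side is an \emph{entire} function, every zero of $(f-a_1)(f-a_2)$, apart from those lying among the zeros of $a_1-a_2$ (which contribute only $S(r,f)$), must be cancelled by a zero of $f^{(1)}-f$; and since $a_1\not\equiv a_2$ the zero sets of $f-a_1$ and $f-a_2$ are themselves disjoint outside an $S(r,f)$-set. Combining this with the second main theorem for small targets applied to $f$ with $a_1,a_2,\infty$,
\[ T(r,f)\le \ol N\!\left(r,\tfrac{1}{f-a_1}\right)+\ol N\!\left(r,\tfrac{1}{f-a_2}\right)+S(r,f), \]
reduces the problem to the single estimate $\ol N\bigl(r,1/(f^{(1)}-f)\bigr)=S(r,f)$.

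The step I expect to be the main obstacle is exactly this last estimate, since the naive bound $T(r,f^{(1)}-f)\le 2T(r,f)+S(r,f)$ only produces the tautological $T(r,f)\le 2T(r,f)+S(r,f)$. The genuine idea is to upgrade the pointwise CM coincidence into a multiplicity statement: every zero of $f-a_j$ of order $k$ is a zero of $f^{(1)}-a_j$ of order \emph{exactly} $k$, so a local Taylor expansion at such a point forces agreement of successive derivatives and, via the auxiliary small functions $a_j-a_j'$, inflates the order of vanishing of $f^{(1)}-f$ enough to absorb the counting contribution into $S(r,f)$. Coupling this multiplicity reduction with the logarithmic-derivative estimates for $e^{\alpha_1}$ and $e^{\alpha_2}$ extracted from the displayed identities closes the loop and delivers the forbidden bound $T(r,f)=S(r,f)$, eliminating the alternative $e^{\alpha_1}\not\equiv e^{\alpha_2}$ and completing the proof.
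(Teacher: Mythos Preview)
The paper does not give its own proof of Theorem~B; it is simply quoted from \cite{ZW1} as background, so there is nothing in the paper to compare your argument against.

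On its own merits, your proposal has a genuine gap at the decisive point. A minor issue first: since $a_1,a_2$ are merely meromorphic small functions, the quotients $(f^{(1)}-a_j)/(f-a_j)$ need not be entire and zero--free, so you cannot invoke Hadamard to write them as $e^{\alpha_j}$; you only get functions $h_j$ with $N(r,h_j)+N(r,1/h_j)=S(r,f)$. This is repairable. The real problem is the final step: you propose to derive the contradiction from the estimate $\ol N\bigl(r,1/(f^{(1)}-f)\bigr)=S(r,f)$, but your own chain of inequalities already gives
\[
T(r,f)\le \ol N\!\left(r,\tfrac{1}{f-a_1}\right)+\ol N\!\left(r,\tfrac{1}{f-a_2}\right)+S(r,f)\le \ol N\!\left(r,\tfrac{1}{f^{(1)}-f}\right)+S(r,f),
\]
so the ``target estimate'' is not a stepping stone toward the contradiction $T(r,f)=S(r,f)$ --- it \emph{is} the contradiction. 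Your proposed mechanism for reaching it, ``multiplicity inflation'', does not work: if $z_0$ is a zero of $f-a_j$ of order $k$, the CM hypothesis gives that $f^{(1)}-a_j$ vanishes to order exactly $k$ there, and subtraction yields only that $f^{(1)}-f$ vanishes to order $\ge k$ at $z_0$; a direct Taylor computation shows no higher order is forced in general (already for $k=1$ one gets only a simple zero of $f^{(1)}-f$). Thus the counting contribution is \emph{not} absorbed into $S(r,f)$, and the argument stalls precisely where you flagged the obstacle.

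The actual Zheng--Wang proof (and later refinements) proceeds differently: one differentiates the master identity to eliminate $f$ entirely and obtain an algebraic/differential relation among $h_1,h_2,a_1,a_2$ alone, and then uses Nevanlinna estimates on that relation to force $T(r,h_j)=S(r,f)$, from which the master identity gives $T(r,f)=S(r,f)$. Your outline is missing this elimination step, and the hand-wave about ``logarithmic-derivative estimates for $e^{\alpha_1}$ and $e^{\alpha_2}$'' does not substitute for it.
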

 
In 2000, Qiu \cite{GQ1} replaced sharing CM to IM in Theorem B and proved the following. 
\begin{theoC} \cite[Theorem 1]{GQ1} Let $f$ be a non-constant entire function and let $a_1(\not\equiv \infty)$ and $a_2(\not\equiv \infty)$ be two distinct small functions of $f$. If $f$ and $f^{(1)}$ share $a_1$ and $a_2$ IM, then $f\equiv f^{(1)}$.
\end{theoC}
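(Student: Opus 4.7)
My plan is to argue by contradiction: assume $f \not\equiv f'$ and derive a violation of the second main theorem for small targets (Osgood--Steinmetz). Writing $L_i := f - a_i$ and $M_i := f' - a_i$ for $i = 1, 2$, a direct computation gives the basic algebraic identity
\[
\frac{M_1}{L_1} - \frac{M_2}{L_2} \;=\; \frac{(a_1 - a_2)(f' - f)}{L_1 L_2},
\]
whose right-hand side is not identically zero under the contradiction hypothesis (using $a_1 \not\equiv a_2$). By the IM sharing hypothesis, $L_i$ and $M_i$ have exactly the same zero set.

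The first key observation is that, apart from $S(r, f)$ exceptional points, every zero of $L_i$ is simple. Indeed, at a zero $z_0$ of $L_i$ of multiplicity $k \geq 2$, matching Taylor coefficients of $f$ and $a_i$ around $z_0$ forces $f'(z_0) = a_i'(z_0)$; but IM sharing forces $f'(z_0) = a_i(z_0)$; hence $a_i(z_0) = a_i'(z_0)$. When $a_i \not\equiv a_i'$ (the degenerate case $a_i' \equiv a_i$, forcing $a_i = C e^z$, can be disposed of by a short separate argument), such points contribute only $S(r, f)$ to the counting function. At every simple zero $z_0$ of $L_i$ the IM sharing then gives $f(z_0) = f'(z_0) = a_i(z_0)$, so $z_0$ is also a zero of $f - f'$. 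Because the zeros of $L_1$ and $L_2$ can overlap only on the $S(r,f)$-small zero set of $a_1 - a_2$, summing these contributions yields
\[
\overline{N}\!\left(r, \tfrac{1}{L_1}\right) + \overline{N}\!\left(r, \tfrac{1}{L_2}\right) \;\leq\; N\!\left(r, \tfrac{1}{f - f'}\right) + S(r, f).
\]

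The second main theorem for small targets applied to the entire function $f$ (with $a_1, a_2$ and $\infty$) yields $T(r, f) \leq \overline{N}(r, 1/L_1) + \overline{N}(r, 1/L_2) + S(r, f)$. Combined with the trivial bound $N(r, 1/(f - f')) \leq T(r, f - f') + O(1) \leq T(r, f) + S(r, f)$, this produces a chain of inequalities saturated up to $S(r, f)$. The contradiction I would extract is: saturation of $\overline{N}(r, 1/L_1) + \overline{N}(r, 1/L_2) \leq N(r, 1/(f - f')) + S(r, f)$ forces $f - f'$ to vanish essentially only at the common zeros of $L_i$ and $M_i$ (and with matching multiplicities); feeding this back into the identity above, whose left-hand side is controlled by logarithmic derivatives of $L_1$ and $L_2$, forces $M_1/L_1 \equiv M_2/L_2$, that is, $f \equiv f'$. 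The hardest part of this final step is a careful bookkeeping of multiplicities, typically done by introducing an additional auxiliary function of the form $L_1'/L_1 - M_1'/M_1 - L_2'/L_2 + M_2'/M_2$, proving its characteristic is $S(r, f)$ (its poles from multiple zeros cancel and its $m$-part is controlled by the lemma on the logarithmic derivative), and then extracting the rigid identity. Controlling the small-function correction terms involving $a_i' - a_i$ throughout this bookkeeping, without letting any accumulate to a full $T(r, f)$ contribution, is the delicate task that distinguishes the IM setting of Theorem C from the CM setting of Theorem B.
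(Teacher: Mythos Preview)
The paper does not contain a proof of Theorem~C at all: it is quoted verbatim from Qiu's 2000 paper \cite{GQ1} as historical background, with no argument supplied. So there is no ``paper's own proof'' to compare against. The closest thing in the paper is the proof of Theorem~\ref{t2} (the case of general $k$ under the extra hypothesis $a_1a_2\not\in\mathbb{C}$), which together with the cited Theorem~F covers Theorem~C; but that proof proceeds along a completely different route, going through the auxiliary function $\phi$ of (\ref{al4}), the machinery of Lemma~2.7 of \cite{MSS1}, and, in the delicate sub-case $\phi\equiv a_1^{(1)}-a_2^{(1)}$, Yamanoi's asymptotic equality (Lemma~\ref{l5}).

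Your sketch follows instead the classical logarithmic-derivative/auxiliary-function line one would expect from Qiu's original argument. The first two-thirds are sound: the identity for $M_1/L_1-M_2/L_2$, the observation that multiple $a_i$-points of $f$ force $a_i(z_0)=a_i'(z_0)$ and are therefore $S(r,f)$-rare when $a_i\not\equiv a_i'$, and the saturation chain $T(r,f)\le \overline N(r,1/L_1)+\overline N(r,1/L_2)\le N(r,1/(f-f'))\le T(r,f)+S(r,f)$ are all correct. The genuine gap is the last paragraph. You assert that saturation ``forces $M_1/L_1\equiv M_2/L_2$'' via the auxiliary function $\psi=L_1'/L_1-M_1'/M_1-L_2'/L_2+M_2'/M_2$, but you have not shown $T(r,\psi)=S(r,f)$: the poles of $\psi$ coming from points where $L_i$ has a simple zero but $M_i$ has a multiple zero (or vice versa) are not yet controlled by anything you have proved, and saturation alone does not obviously bound them. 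Nor have you explained how $\psi\equiv 0$ (even if you get it) yields $f\equiv f'$ rather than a weaker functional relation. Finally, the dismissal of the degenerate case $a_i\equiv a_i'$ (i.e.\ $a_i=Ce^z$) as ``a short separate argument'' is too casual; that case does require its own treatment. In short, the strategy is reasonable and classical, but the proof as written stops exactly where the real work begins.
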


On the other hand, Yang \cite{LZY1} investigated the problem of uniqueness of an entire function when it share two values with its $k$-th derivative and obtained the following results.

\begin{theoD} \cite[Theorem 1]{LZY1} Let $f$ be a non-constant entire function, $k\geq 2$ be an integer and let $a_1$ be a non-zero finite complex number. Suppose $0$ is a Picard exceptional value of both $f$ and $f^{(k)}$. If $f$ and $f^{(k)}$ share $a_1$ IM, then $f\equiv f^{(k)}$ and so $f(z)=e^{Az+B}$, where $A$ and $B$ are constants such that $A^k=1$.
\end{theoD}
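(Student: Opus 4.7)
The plan is to combine Nevanlinna's second main theorem with the auxiliary function $\varphi=f^{(k)}/f$ to force $f\equiv f^{(k)}$, and then to solve the resulting constant-coefficient linear ODE under the Picard hypothesis. Since $0$ is Picard exceptional for both $f$ and $f^{(k)}$, the counting functions $\ol{N}(r,1/f)$ and $\ol{N}(r,1/f^{(k)})$ are $S(r,f)$; since $f$ admits a Picard exceptional value, it is transcendental and entire, so $\ol{N}(r,f)=0$. Applying the second main theorem to $f$ at the three values $0$, $a_1$, $\infty$ therefore yields
\[
T(r,f)\le \ol{N}\!\left(r,\tfrac{1}{f-a_1}\right)+S(r,f).
\]

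Set $\varphi=f^{(k)}/f$. Because $f$ is zero-free, $\varphi$ is entire, and the lemma on the logarithmic derivative gives $T(r,\varphi)=m(r,\varphi)=m(r,f^{(k)}/f)=S(r,f)$. The IM sharing hypothesis tells us that at every zero $z_0$ of $f-a_1$ we also have $f^{(k)}(z_0)=a_1$, so $\varphi(z_0)=1$. Consequently
\[
\ol{N}\!\left(r,\tfrac{1}{f-a_1}\right)\le \ol{N}\!\left(r,\tfrac{1}{\varphi-1}\right)\le T(r,\varphi)+O(1)=S(r,f),
\]
provided $\varphi\not\equiv 1$. Combined with the preceding inequality this forces $T(r,f)=S(r,f)$, contradicting the transcendence of $f$. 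Hence $\varphi\equiv 1$, that is, $f^{(k)}\equiv f$.

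The differential equation $f^{(k)}=f$ has general entire solution $f(z)=\sum_{j=0}^{k-1}c_j\,e^{\omega_j z}$, where $\omega_0,\dots,\omega_{k-1}$ are the distinct $k$-th roots of unity. Because $f$ is zero-free, Borel's theorem on linear combinations of exponentials (equivalently, the fact that a sum of two or more independent exponentials always assumes every finite value, so in particular $0$, infinitely often) forces all but one of the $c_j$ to vanish. Writing the surviving term as $c\,e^{\omega z}=e^{Az+B}$ with $\omega=A$, $c=e^B$, and $A^k=1$, we obtain the stated form of $f$.

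I do not anticipate a genuine obstacle in this argument: the only non-mechanical step is the choice of the auxiliary function $\varphi=f^{(k)}/f$, whose role is to simultaneously be a small function (by the logarithmic derivative lemma, thanks to $0$ being Picard exceptional for $f$) and to absorb every $a_1$-point of $f$ into its $1$-set (thanks to the IM sharing). Once this is in place the Nevanlinna bookkeeping is automatic, and the final reduction to a single exponential via Borel's lemma is classical.
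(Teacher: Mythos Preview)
The paper does not prove Theorem D; it is quoted from \cite{LZY1} as background, so there is no ``paper's own proof'' to compare against. That said, your argument is sound and is in fact the standard route for this result. The choice of the auxiliary function $\varphi=f^{(k)}/f$, the logarithmic-derivative estimate $T(r,\varphi)=S(r,f)$, and the absorption of the $a_1$-points of $f$ into the $1$-points of $\varphi$ are exactly the ingredients Yang uses. One small point: your parenthetical that ``a sum of two or more independent exponentials always assumes every finite value infinitely often'' is not literally true in general (e.g.\ $e^{z}+1$ omits $1$), but it is true here because all exponents $\omega_j$ are nonzero $k$-th roots of unity. A cleaner way to finish is to note that if $f=\sum_j c_j e^{\omega_j z}$ has only finitely many zeros, then Hadamard factorization forces $f(z)=P(z)e^{az+b}$ with $P$ a polynomial and $\deg(az+b)\le 1$; comparing with the exponential-sum representation shows $P$ is constant and only one $c_j$ survives, whence $f=e^{Az+B}$ with $A^k=1$.
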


\begin{theoE} \cite[Theorem 2]{LZY1} Let $f$ be a non-constant entire function, $k\geq 2$ be an integer and let $a_1$ and $a_2$ be two distinct finite complex numbers. If $f$ and $f^{(k)}$ share $a_1$ and $a_2$ CM, then $f\equiv f^{(k)}$.
\end{theoE}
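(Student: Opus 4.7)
The plan is to exploit the CM-sharing assumption to build two entire auxiliary functions which turn out to be \emph{small} relative to $f$, and then deduce $f\equiv f^{(k)}$ by a direct elimination. Specifically, I would introduce
$$\phi_i(z) := \frac{f^{(k)}(z) - a_i}{f(z) - a_i}, \qquad i = 1, 2.$$
Because $f$ and $f^{(k)}$ share $a_i$ CM, every zero of $f-a_i$ of multiplicity $m$ is a zero of $f^{(k)}-a_i$ of exactly the same multiplicity, so each $\phi_i$ is entire.

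Next I would show that $\phi_1,\phi_2$ are small functions of $f$. Writing $\phi_i = (f-a_i)^{(k)}/(f-a_i)$ and applying the lemma on the logarithmic derivative to the meromorphic function $f-a_i$ yields $m(r,\phi_i) = S(r,f-a_i) = S(r,f)$. Since $\phi_i$ has no poles, $T(r,\phi_i) = m(r,\phi_i) = S(r,f)$.

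Then I would eliminate $f^{(k)}$. From $f^{(k)} = \phi_i f + a_i(1-\phi_i)$ for $i=1,2$, equating the two expressions gives
$$(\phi_1 - \phi_2)\,f \;=\; a_2(1-\phi_2) - a_1(1-\phi_1). \qquad (\ast)$$
I would then split into two cases. If $\phi_1 \equiv \phi_2$, then $(\ast)$ reads $0 = (a_2-a_1)(1-\phi_1)$; since $a_1\neq a_2$, necessarily $\phi_1 \equiv 1$, which yields $f^{(k)} - a_1 = f - a_1$, i.e.\ $f \equiv f^{(k)}$. If instead $\phi_1 \not\equiv \phi_2$, then $(\ast)$ solves for $f$ as a rational expression in the small functions $\phi_1,\phi_2$, giving $T(r,f) = S(r,f)$ and contradicting the non-constancy of $f$.

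The argument is essentially formal once the correct auxiliary functions are identified, and I do not anticipate a genuine technical obstacle. The one point needing real care is verifying that $\phi_i$ is entire, which uses CM-sharing in an essential way: with IM-sharing the multiplicities need not match and $\phi_i$ could pick up poles at zeros of $f-a_i$, destroying the smallness estimate from the logarithmic derivative lemma. Incidentally, the scheme is insensitive to the value of $k$, so the same proof covers every $k\ge 1$; the hypothesis $k\ge 2$ in the statement reflects only that the case $k=1$ was already known (Theorem A) and is not a restriction of the method.
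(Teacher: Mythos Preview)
The paper does not contain its own proof of Theorem~E: it is quoted as a background result with the citation \cite[Theorem~2]{LZY1}, so there is nothing in the present paper to compare your argument against.

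That said, your proof is correct and is essentially the classical argument one gives for this result. The auxiliary ratios $\phi_i=(f^{(k)}-a_i)/(f-a_i)$ are indeed entire under CM sharing (here it is crucial that $a_i$ is a constant, so that $(f-a_i)^{(k)}=f^{(k)}$ and the logarithmic-derivative lemma applies directly), the smallness estimate $T(r,\phi_i)=m(r,\phi_i)=S(r,f)$ follows, and the elimination step is routine. Your closing remark that the method is insensitive to $k$ and that the CM hypothesis is exactly what prevents $\phi_i$ from acquiring poles is also accurate.
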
 

\smallskip
Frank \cite{GF1} proposed the following conjecture.

\begin{conjA} If a non-constant entire function $f$ shares two finite values IM with its $k$-th derivative, then $f\equiv f^{(k)}$.
\end{conjA}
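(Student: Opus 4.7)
The plan is to construct an auxiliary meromorphic function whose identical equality with $1$ is equivalent to $f \equiv f^{(k)}$, then show it has small Nevanlinna characteristic, and finally argue that the smallness forces it to be the constant $1$. Concretely, I would let $a_1, a_2$ denote the two IM-shared values and set
\[
L := \frac{(f^{(k)} - a_1)(f - a_2)}{(f - a_1)(f^{(k)} - a_2)}.
\]
A direct algebraic computation yields $L - 1 = (a_1 - a_2)(f^{(k)} - f)/[(f - a_1)(f^{(k)} - a_2)]$, so that $L \equiv 1$ if and only if $f \equiv f^{(k)}$. The crucial virtue of $L$ is that IM sharing forces simultaneous vanishing of numerator and denominator factors: every zero $z_0$ of $f - a_i$ is also a zero of $f^{(k)} - a_i$ for $i = 1, 2$, and these singularities cancel whenever the multiplicities on the two sides agree.

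Next, I would aim to prove $T(r, L) = S(r, f)$. For the proximity function $m(r, L)$, I would rewrite $L$ in a form amenable to the lemma on the logarithmic derivative, using that $f^{(k)}/(f - a_1)$ differs from $(f - a_1)^{(k)}/(f - a_1)$ by a quantity with finite proximity function. For the counting function $N(r, L)$, IM sharing reduces the poles of $L$ to those points where the multiplicities of $f - a_i$ and $f^{(k)} - a_i$ mismatch, and these ``multiplicity-gap'' points are controlled by Nevanlinna's second main theorem applied to $f$ with targets $a_1$, $a_2$, $\infty$, together with the standard bound that $\overline{N}$-counting of zeros of derivatives is subordinate to $T(r, f)$.

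Granting the smallness $T(r, L) = S(r, f)$, I would then exclude the possibility that $L$ is a nonconstant small function of $f$. The idea is to evaluate $L$ along the abundant sequence of zeros of $f - a_1$: at a simple such zero $z_0$, $L(z_0)$ equals the ratio of the leading Taylor coefficients of $f^{(k)} - a_1$ and $f - a_1$ at $z_0$, and this ratio is subject to rigid constraints when paired with the density estimate $\overline{N}(r, 1/(f - a_i)) = T(r, f) + S(r, f)$ coming from the second main theorem. Pushing these constraints through should force $L$ to be a constant, and then evaluation at any common zero pins the constant to $1$, yielding $f \equiv f^{(k)}$.

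The main obstacle is controlling $N(r, L)$ under the IM (rather than CM) hypothesis: multiplicity mismatches of order $|p - q|$ between the zeros of $f - a_i$ and of $f^{(k)} - a_i$ contribute polar orders to $L$ that can in principle be arbitrarily large, and bounding their aggregate counting function goes beyond the direct reach of the logarithmic derivative lemma. For $k = 1$ this is resolved by the analyses of Gundersen and of Mues--Steinmetz, but for $k \geq 2$ genuinely new input appears to be required, most plausibly a Wiman--Valiron analysis on the maximum-modulus curve of $f$, or a direct attack through the linear differential operator $f \mapsto f^{(k)} - f$ and the factorization of its characteristic polynomial. This gap is precisely the reason Frank's conjecture has remained open in the generality stated above.
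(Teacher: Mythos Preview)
The paper does not prove Conjecture~A at all. It is quoted purely as historical background: immediately after stating it, the paper records that Li and Yang \cite{LY1} ``fully settled Conjecture~A'' in 2000 (Theorem~F), and then moves on to the small-function generalisation (Conjecture~B), which is the actual subject of the paper. The paper's own Theorem~\ref{t2} explicitly requires $a_1a_2\notin\mathbb{C}$, i.e.\ at least one shared target must be non-constant, so even that result does not re-prove Conjecture~A. There is therefore nothing in the paper to compare your attempt against.

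As for the attempt itself: it is not a proof but a programme, and you are candid that the decisive step---bounding $N(r,L)$ under IM sharing when $k\ge 2$---is missing. That gap is real: with only IM sharing, a zero of $f-a_i$ of multiplicity $p$ meets a zero of $f^{(k)}-a_i$ of multiplicity $q$, and the contribution $|p-q|$ to the pole/zero divisor of $L$ is not controlled by the logarithmic-derivative lemma or by the second main theorem alone. Your suggestion to fall back on Wiman--Valiron or on the operator $D^k-1$ is speculative and does not constitute an argument. Moreover, your closing sentence is factually wrong: Frank's conjecture has \emph{not} ``remained open''---it was resolved by Li and Yang in 2000, as the paper states. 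If you want to see how the IM case for general $k$ is actually handled, that reference is where to look; the mechanism is quite different from the cross-ratio auxiliary function you set up.
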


In 2000, Li and Yang \cite{LY1} fully settled Conjecture A in the following way.

\begin{theoF}\cite[Theorem 2.3]{LY1} Let $f$ be a non-constant entire function and let $a_1$ and $a_2$ be two distinct complex numbers. If $f$ and $f^{(k)}$ share $a_1$ and $a_2$ IM, then $f\equiv f^{(k)}$. 
\end{theoF}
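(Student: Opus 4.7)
The plan is to argue by contradiction, assuming $f\not\equiv f^{(k)}$. Building on Mues--Steinmetz's $k=1$ argument, the strategy is to force IM sharing to coincide with CM sharing modulo $S(r,f)$, after which Theorem E applies and produces the desired contradiction.

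Since $f$ is entire, $T(r, f^{(k)}) = T(r,f) + S(r,f)$, and the second fundamental theorem gives
\[
T(r,f) \le \overline{N}(r, 1/(f-a_1)) + \overline{N}(r, 1/(f-a_2)) + S(r,f),
\]
with each $\overline{N}(r, 1/(f-a_j))$ equal to $\overline{N}(r, 1/(f^{(k)}-a_j))$ by IM sharing. The central device is the auxiliary function
\[
H := \frac{f^{(k+1)}}{(f^{(k)}-a_1)(f^{(k)}-a_2)} - \frac{f'}{(f-a_1)(f-a_2)}.
\]
By partial-fraction decomposition and the lemma on the logarithmic derivative, $m(r, H) = S(r, f)$. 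A local residue computation shows that at a common $a_j$-point where $f - a_j$ has multiplicity $p$ and $f^{(k)} - a_j$ has multiplicity $q$, the two terms of $H$ contribute simple poles with residues $p/(a_1 - a_2)$ and $q/(a_1 - a_2)$; hence $H$ has a pole at exactly those common $a_j$-points with $p \ne q$, and consequently $N(r, H)$ is controlled by the ``CM defect'' $\sum_{j}[N(r, 1/(f-a_j)) - \overline{N}(r, 1/(f-a_j))]$.

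If $H \equiv 0$, integration yields $(f^{(k)}-a_1)/(f^{(k)}-a_2) = A(f-a_1)/(f-a_2)$ for a non-zero constant $A$. The case $A = 1$ gives $f\equiv f^{(k)}$ at once, while $A\ne 1$ rearranges to $(f+\alpha)(f^{(k)}+\beta) = C$ for suitable constants; writing $f+\alpha = e^g$ (valid when $C \ne 0$, as the product forces $f+\alpha$ to be zero-free) and invoking Borel's theorem on the resulting identity forces $g$ to be constant, contradicting $f$ non-constant. If $H \not\equiv 0$, then a companion auxiliary function---constructed from $H$, $f'$, $f^{(k+1)}$ and their higher analogues---is designed to vanish at every common $a_j$-point with matching multiplicities, so that its zero counting function bounds $\overline{N}_{1)}(r, 1/(f-a_j))$, the counting function of simple common $a_j$-points. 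Combined with the bound on $N(r, H)$ above and the second fundamental theorem, this forces the CM defect to be $S(r,f)$, i.e.\ IM and CM sharing coincide modulo $S(r,f)$; Theorem E (or its standard small-error strengthening) then gives $f \equiv f^{(k)}$, contradicting the hypothesis.

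The main obstacle is the design of the companion auxiliary function in the $H \not\equiv 0$ case, which must simultaneously yield (i) strong enough zero estimates to bound the simple common $a_j$-points and (ii) poles controlled by the CM defect, producing a self-consistent chain of inequalities. The Borel-type sub-case $H \equiv 0$ with $A \ne 1$ is also delicate, demanding the full strength of the entire-function hypothesis together with the order equality $\rho(f^{(k)}) = \rho(f)$; these steps constitute the technical heart of the Li--Yang proof.
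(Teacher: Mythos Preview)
The paper does not give its own proof of Theorem~F; it is quoted from Li--Yang \cite{LY1} as background and motivation for Conjecture~B and the paper's main Theorems~\ref{t2} and~\ref{t1}. So there is no proof in the paper to compare your attempt against. That said, your sketch has genuine gaps worth naming.

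The decisive gap is in the case $H\not\equiv 0$. You announce a ``companion auxiliary function'' that is supposed to vanish at every common $a_j$-point with matching multiplicities and whose pole set is controlled by the CM defect, but you never define it. You then concede that its construction is ``the technical heart of the Li--Yang proof.'' At that point the argument is no longer a proof but a pointer to someone else's proof: everything hinges on an object you have not produced, and the inequalities you need (bounding the simple-point counting function by the zero set of this unspecified function, while simultaneously controlling its poles) do not follow from what you have written. A second, related gap is the appeal to Theorem~E ``or its standard small-error strengthening'': Theorem~E as stated requires exact CM sharing, and the passage from ``CM modulo $S(r,f)$'' to the conclusion $f\equiv f^{(k)}$ is itself nontrivial and cannot be waved through.

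There is also a loose end in the $H\equiv 0$, $A\neq 1$ sub-case. The factorisation $(f+\beta)(f^{(k)}+\alpha)=C$ is correct, but you must separately dispose of $C=0$ (which forces $f$ to be a polynomial of degree $\le k$, to be checked against the sharing hypothesis), and when $C\neq 0$ the ``Borel's theorem'' step is not immediate: writing $f+\beta=e^{g}$ gives $P_k(g',\dots,g^{(k)})e^{2g}+\alpha e^{g}=C$ with $P_k$ a differential polynomial in $g$, and one must argue carefully that the coefficients are small relative to $e^{g}$ before any Borel-type linear independence applies. Finally, the opening claim $T(r,f^{(k)})=T(r,f)+S(r,f)$ is not automatic for entire $f$; only the inequality $T(r,f^{(k)})\le T(r,f)+S(r,f)$ comes for free, and the reverse direction has to be extracted from the sharing hypothesis.
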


Regarding Theorem F, Li and Yang \cite{LY1} posed the following conjecture at the end of the same paper.

\begin{conjB} Theorem F still holds when $a_1$ and $a_2$ are two arbitrary distinct small functions of $f$.
\end{conjB}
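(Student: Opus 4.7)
The plan is to argue by contradiction. Assume that $f$ is a non-constant entire function, that $a_1,a_2$ are two distinct small functions of $f$, that $f$ and $f^{(k)}$ share $a_1$ and $a_2$ IM, and that $f\not\equiv f^{(k)}$. The target is to deduce $T(r,f)=S(r,f)$, contradicting non-constancy. The main tools are Yamanoi's second main theorem for small moving targets and a sharp multiplicity bookkeeping adapted to IM sharing, in the spirit of Qiu's proof of Theorem C.

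First I would introduce the auxiliary logarithmic derivative differences
\[
\varphi_i \;=\; \frac{f^{(k+1)}-a_i'}{f^{(k)}-a_i}\;-\;\frac{f'-a_i'}{f-a_i},\qquad i=1,2,
\]
for which $m(r,\varphi_i)=S(r,f)$ by the lemma of logarithmic derivatives. A local computation at a zero $z_0$ of $f-a_i$, outside an exceptional set accounting for zeros and poles of $a_1,a_2,a_1-a_2$ of total counting $S(r,f)$, shows that if $f-a_i$ has multiplicity $p$ and $f^{(k)}-a_i$ has multiplicity $q$ at $z_0$, then $\varphi_i$ has at worst a simple pole there with residue $q-p$. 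Because IM sharing guarantees $p,q\ge 1$ simultaneously, every simple common zero ($p=q=1$) is regular for $\varphi_i$, and the only poles come from "unequal-multiplicity" coincidences. The combination $\varphi_1-\varphi_2$ can be put, by direct algebra followed by partial fractions, into the form of a logarithmic derivative of the cross-ratio $\frac{(f^{(k)}-a_1)(f-a_2)}{(f^{(k)}-a_2)(f-a_1)}$ up to a correction of characteristic $S(r,f)$ involving $a_i'$. Under the assumption $f\not\equiv f^{(k)}$ the cross-ratio is not constant, and one extracts the estimate
\[
\overline N\!\left(r,\tfrac{1}{f-a_1}\right)+\overline N\!\left(r,\tfrac{1}{f-a_2}\right)\;\le\;\alpha\,T(r,f)+S(r,f),\qquad \alpha<1,
\]
by splitting the reduced counting on the left into "equal-multiplicity" and "unequal-multiplicity" parts, absorbing the first into $N(r,1/(f-f^{(k)}))\le T(r,f)+S(r,f)$ and controlling the second by $N(r,\varphi_i)+S(r,f)$.

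Finally, apply Yamanoi's second main theorem to the entire function $f$ with the three small targets $a_1,a_2,\infty$: since $\overline N(r,f)=0$, for every $\varepsilon>0$,
\[
(1-\varepsilon)\,T(r,f)\;\le\;\overline N\!\left(r,\tfrac{1}{f-a_1}\right)+\overline N\!\left(r,\tfrac{1}{f-a_2}\right)+S(r,f).
\]
Combining this with the previous estimate and choosing $\varepsilon<1-\alpha$ produces $T(r,f)\le S(r,f)$, the desired contradiction, and so $f\equiv f^{(k)}$ as claimed.

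The hard part will be twofold. On one hand, the multiplicity bookkeeping must be sharp enough to guarantee $\alpha<1$: this is the usual obstacle in upgrading CM-sharing results to IM-sharing, and it intensifies for $k\ge 2$ because a simple zero of $f-a_i$ may correspond to a zero of $f^{(k)}-a_i$ of arbitrarily large order, so the unequal-multiplicity counting must be estimated uniformly in the multiplicity, presumably by invoking a Milloux-type inequality for $f^{(k)}$. On the other hand, one must rule out the degenerate subcase in which the leading term of $\varphi_1-\varphi_2$ is cancelled by the $a_i'$-correction; integrating such a cancellation yields a functional relation of the form $(f^{(k)}-a_1)/(f-a_1)\equiv c\cdot(f^{(k)}-a_2)/(f-a_2)$ for some small function $c$, from which IM sharing and $a_1\not\equiv a_2$ must still be shown to force $f\equiv f^{(k)}$. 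Both difficulties are genuinely new relative to the constant-target case of Theorem F, and they are the reason Conjecture B has resisted a direct adaptation of the Li--Yang argument.
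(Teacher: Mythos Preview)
The statement you are attempting to prove is \emph{Conjecture B}, and the paper does not prove it. The authors say explicitly that ``to the knowledge of authors Conjecture B is not still confirmed''; what they actually establish is the partial result Theorem~\ref{t2}, which carries the extra hypothesis $a_1a_2\not\in\mathbb{C}$. So there is no ``paper's own proof'' of this statement to compare against, and any complete argument would be a new theorem.

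Your outline also contains a genuine gap at the key step. You claim an inequality
\[
\ol N\!\left(r,a_1;f\right)+\ol N\!\left(r,a_2;f\right)\;\le\;\alpha\,T(r,f)+S(r,f),\qquad \alpha<1,
\]
obtained by ``absorbing'' the equal-multiplicity contribution into $N(r,0;f-f^{(k)})\le T(r,f)+S(r,f)$ and controlling the remainder by $N(r,\varphi_i)$. But the first absorption already costs the full $T(r,f)$, so nothing in your bookkeeping produces $\alpha<1$; you would need a \emph{strict} saving somewhere, and none is indicated. In fact the paper derives exactly the opposite equality (see (\ref{al3}) in the proof of Lemma~\ref{l7}):
\[
T(r,f)=\ol N(r,a_1;f)+\ol N(r,a_2;f)+o(T(r,f)),
\]
so the quantity you want to bound below $\alpha T(r,f)$ is asymptotically \emph{equal} to $T(r,f)$. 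Feeding this into Yamanoi's second main theorem yields no contradiction. You acknowledge this yourself in the final paragraph (``the multiplicity bookkeeping must be sharp enough to guarantee $\alpha<1$: this is the usual obstacle''), which means your write-up is a sketch of where the difficulty lies rather than a proof.

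For contrast, the paper's route to the partial Theorem~\ref{t2} is entirely different: it builds the auxiliary small function $\phi$ of (\ref{al4}), separates the cases $\phi\not\equiv a_1^{(1)}-a_2^{(1)}$ and $\phi\equiv a_1^{(1)}-a_2^{(1)}$, and in the delicate second case invokes Yamanoi's asymptotic \emph{equality} (Lemma~\ref{l5}),
\[
\ol m_{0,\nu(r)}(r,f)+\sideset{}{_{a\in\hat{\mathbb{C}}}}{\sum}N_1(r,a;f)=2T_f(r,\Omega)+o(T_f(r,\Omega)),
\]
rather than just the second main theorem inequality. This equality is what extracts the missing strength, and even so the argument requires at least one of $a_1,a_2$ to be non-constant. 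Your auxiliary functions $\varphi_i$ and the cross-ratio play no role in the paper's proof.
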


To the knowledge of authors Conjecture B is not still confirmed. Recently Majumder et al. \cite{MSS1} settled Conjecture B partially and obtained the following result.

\begin{theoG}\cite[Theorem 1.3]{MSS1} Let $f$ be a non-constant entire function and let $a_1(\not\equiv \infty)$ and $a_2(\not\equiv \infty)$ be two distinct non-constant small functions of $f$ such that $a_1^{(2)}\not\equiv a_2^{(2)}$.  If $f$ and $f^{(k)}\;(k\geq 1)$ share $a_1$ and $a_2$ IM, then $f\equiv f^{(k)}$.
\end{theoG}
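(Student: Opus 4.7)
The plan is to argue by contradiction: assume $f\not\equiv f^{(k)}$ and set $F:=f-f^{(k)}$, so $F\not\equiv 0$. The starting point is Yamanoi's second main theorem for small targets applied to the entire function $f$ with $a_1,a_2,\infty$, which gives
\[
T(r,f)\le \overline{N}\!\left(r,\tfrac{1}{f-a_1}\right)+\overline{N}\!\left(r,\tfrac{1}{f-a_2}\right)+S(r,f).
\]
From the identity $F=(f-a_i)-(f^{(k)}-a_i)$ and the IM sharing hypothesis, every zero of $f-a_i$ is also a zero of $F$; since $a_1\not\equiv a_2$ makes the zero sets for $i=1,2$ essentially disjoint, I obtain the basic estimate
\[
T(r,f)\le \overline{N}\!\left(r,\tfrac{1}{F}\right)+S(r,f), \qquad (*)
\]
and the rest of the proof is devoted to showing that the right-hand side of $(*)$ is $S(r,f)$.

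The main tool is the pair of auxiliary functions
\[
\varphi_i:=\frac{(f^{(k)}-a_i)'}{f^{(k)}-a_i}-\frac{(f-a_i)'}{f-a_i},\qquad i=1,2,
\]
each satisfying $m(r,\varphi_i)=S(r,f)$ by the lemma on logarithmic derivatives. At a common zero of $f-a_i$ and $f^{(k)}-a_i$ with respective multiplicities $p_i,q_i$, the function $\varphi_i$ has a simple pole of residue $q_i-p_i$, which vanishes exactly when the sharing is CM at that point. Set $H:=\varphi_1-\varphi_2$; then $m(r,H)=S(r,f)$ and the poles of $H$ lie only at the IM-but-not-CM sharing points.

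I split into two cases. If $H\not\equiv 0$, a careful multiplicity analysis comparing the local behavior of $H$ and $F$ at common zeros via Taylor expansion produces a bound $\overline{N}(r,1/F)\le N(r,H)+S(r,f)$; a separate count using the IM sharing structure yields $N(r,H)=S(r,f)$, hence $\overline{N}(r,1/F)=S(r,f)$, contradicting $(*)$. If instead $H\equiv 0$, integrating $\varphi_1\equiv \varphi_2$ gives
\[
\frac{f^{(k)}-a_1}{f-a_1}\equiv C\cdot\frac{f^{(k)}-a_2}{f-a_2}
\]
for some nonzero constant $C$. The case $C=1$ immediately yields $(a_1-a_2)(f-f^{(k)})\equiv 0$, hence $f\equiv f^{(k)}$, contradicting the standing assumption. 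The case $C\ne 1$ rearranges to $(f+b_1)(f^{(k)}+b_2)\equiv R$ for explicit small functions $b_1,b_2,R$ with $R\not\equiv 0$; differentiating this identity twice and invoking the hypothesis $a_1''\not\equiv a_2''$ to rule out a collapse of the resulting coefficient system leads once more to $T(r,f)=S(r,f)$, a contradiction.

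The principal obstacle is the multiplicity bookkeeping in the first case: establishing both $\overline{N}(r,1/F)\le N(r,H)+S(r,f)$ and $N(r,H)=S(r,f)$ requires delicate Clunie-type estimates together with a pointwise Taylor analysis at each IM-sharing point, since the IM hypothesis controls location but not multiplicity of the common zeros. The non-degeneracy $a_1''\not\equiv a_2''$ enters critically in the second case, where it is the minimal hypothesis that blocks the spurious quadratic algebraic relation among $f,f^{(k)},a_1,a_2$ which would otherwise allow $H\equiv 0$ to coexist with $f\not\equiv f^{(k)}$.
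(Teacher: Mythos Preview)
The statement you address (Theorem~G) is quoted from \cite{MSS1} and is not proved in this paper; but the paper proves the strictly stronger Theorem~\ref{t2} by the method of Theorem~\ref{t1}, which is also the method of \cite{MSS1}, so that is the relevant comparison. That method is entirely different from yours: one builds the single auxiliary function $\phi=\Delta(g)(g-f^{(k)})/\big((g-a_1)(g-a_2)\big)$ with $\Delta(g)=(g-a_1)(a_1'-a_2')-(g'-a_1')(a_1-a_2)$, shows $\phi$ is small, rewrites the identity as a polynomial relation in $g$, and differentiates it $k$ times to obtain a degree-$2k$ polynomial in $g$ whose leading coefficient $\psi_k$ satisfies a first-order recursion. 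Mohon'ko's lemma then forces $T(r,g)=T(r,f^{(k)})+o(T(r,f))$, after which a separate comparison lemma (Lemma~2.7 of \cite{MSS1}, analogous to Lemma~\ref{l7} here) closes the argument. The hypothesis $a_1''\not\equiv a_2''$ enters only to dispose of the degenerate sub-case $\psi_k\equiv 0$.

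Your route via $H=\varphi_1-\varphi_2$ is a natural first idea, but the case $H\not\equiv 0$ has a genuine gap. You assert both $\overline N(r,1/F)\le N(r,H)+S(r,f)$ and $N(r,H)=S(r,f)$, yet neither is justified, and the first is in general false: at a point $z_0$ where $f$ and $f^{(k)}$ share $a_1$ with \emph{equal} multiplicities, $F=f-f^{(k)}$ has a zero while both $\varphi_1$ and $\varphi_2$ are holomorphic, so $H$ has no pole at $z_0$. Hence CM-type sharing points feed $\overline N(r,1/F)$ but not $N(r,H)$, and under a bare IM hypothesis you have no control over how many sharing points are of this kind. Your second assertion, $N(r,H)=S(r,f)$, is equivalent to saying the sharing is CM outside an $S(r,f)$ set, which is exactly the hard part of the problem and does not follow from ``the IM sharing structure'' alone; the appeal to ``Clunie-type estimates together with a pointwise Taylor analysis'' is not specific enough to deliver either inequality. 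In the case $H\equiv 0$, $C\ne 1$, your plan to differentiate twice and invoke $a_1''\not\equiv a_2''$ is plausible in outline but is also only a sketch; you would still need to exhibit the coefficient system explicitly and show precisely why $a_1''\equiv a_2''$ is the unique obstruction.
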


Also in the same paper,  Majumder et al. \cite{MSS1} asked the following question:
\begin{QuesA} Is it possible to establish Theorem G without the hypothesis ``$a_1^{(2)}\not\equiv a_2^{(2)}$'' ?
\end{QuesA}

In the paper, we solve Question A fully. In fact, we prove the following result.

\begin{theo}\label{t2} Let $f$ be a non-constant entire function and let $a_1(\not\equiv \infty)$ and $a_2(\not\equiv \infty)$ be two distinct small functions of $f$ such that $a_1a_2\not\in\mathbb{C}$. If $f$ and $f^{(k)}\;(k\geq 1)$ share $a_1$ and $a_2$ IM, then $f\equiv f^{(k)}$.
\end{theo}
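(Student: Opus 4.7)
My plan is to argue by contradiction. Assume $f \not\equiv f^{(k)}$ and introduce
\[ h \;:=\; \frac{f^{(k)} - f}{(f - a_1)(f - a_2)} \;\not\equiv 0. \]
A direct substitution yields the factorisation $f^{(k)} - a_i = (f - a_i)\bigl[1 + h(f - a_{3-i})\bigr]$ for $i = 1, 2$, equivalently the quadratic-in-$f$ identity
\[ f^{(k)} \;=\; h f^2 \;+\; \bigl[1 - h(a_1 + a_2)\bigr] f \;+\; h\, a_1 a_2. \qquad (\ast) \]

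The first objective is to prove that $h$ is a small function of $f$, i.e.\ $T(r, h) = S(r, f)$. For $m(r, h) = S(r, f)$ I will apply a Clunie-type lemma to the rewriting $h f^2 = f^{(k)} - [1 - h(a_1 + a_2)] f - h a_1 a_2$, which presents $h f^2$ as a differential polynomial of degree one in $f$ with small-function coefficients. For $N(r, h) = S(r, f)$ I will exploit the IM-sharing hypothesis: $h$ is holomorphic at every common zero of $f - a_i$ and $f^{(k)} - a_i$ where the multiplicities on the two sides agree, so poles of $h$ arise only from multiplicity mismatches at the shared zeros, and the sharp form of Nevanlinna's second fundamental theorem for the entire function $f$ (with the $-N_0(r, 1/f')$ correction) together with its analogue for $f^{(k)}$ confines these contributions to $S(r, f)$.

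Differentiating $(\ast)$ and eliminating $f^2$ via $(\ast)$ itself yields, after standard but careful bookkeeping,
\[ f^{(k+1)} - \frac{h'}{h}\, f^{(k)} \;\equiv\; f'\bigl[1 + h(f - a_1) + h(f - a_2)\bigr] \;-\; \Bigl(\tfrac{h'}{h} + h(a_1 + a_2)'\Bigr) f \;+\; h (a_1 a_2)'. \qquad (\ast\ast) \]
The term $h (a_1 a_2)'$ is precisely where the new hypothesis enters: $a_1 a_2 \not\in \mathbb{C}$ is equivalent to $(a_1 a_2)' \not\equiv 0$, and coupled with $h \not\equiv 0$ it makes $h(a_1 a_2)'$ a non-trivial small function. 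I will then combine $(\ast)$ and $(\ast\ast)$---if needed after differentiating once more and evaluating at the shared simple zeros of $f - a_i$, where $f, f', f^{(k)}, f^{(k+1)}$ are explicitly pinned down in terms of $a_i, a_i'$ and local Taylor coefficients---to extract a polynomial identity with small-function coefficients in which $h(a_1 a_2)'$ cannot cancel. The smallness $T(r, h) = S(r, f)$ then forces this coefficient to vanish identically, contradicting $a_1 a_2 \not\in \mathbb{C}$, and the theorem follows.

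I expect the main obstacle to be this concluding algebraic elimination. In Theorem G the operative hypothesis $a_1^{(2)} \not\equiv a_2^{(2)}$ surfaces as the non-vanishing of a coefficient of $f$ after one differentiation and is visible almost at once, whereas here the hypothesis $a_1 a_2 \not\in \mathbb{C}$ sits in the ``$f$-free'' part of $(\ast\ast)$; isolating it requires the $f$, $f'$, $f''$ contributions from $(\ast)$, $(\ast\ast)$ and the next differentiation to be eliminated cleanly before any Nevanlinna bound on $h$ can close the argument.
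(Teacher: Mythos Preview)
Your setup through $(\ast\ast)$ is in the right spirit: the paper also builds a small auxiliary function out of $(f-f^{(k)})/((f-a_1)(f-a_2))$, though it multiplies by the Wronskian-type factor $\Delta(f)=(f-a_1)(a_1'-a_2')-(f'-a_1')(a_1-a_2)$ precisely so that $N(r,\phi)=S(r,f)$ is immediate rather than requiring the ramification bookkeeping you sketch. The genuine gap is the final step, which you yourself flag as the main obstacle and never carry out. After eliminating $f^{2}$, identity $(\ast\ast)$ still contains $f$, $f'$, $hff'$, $f^{(k)}$ and $f^{(k+1)}$; a further differentiation introduces $f''$ and $f^{(k+2)}$, so the unknowns outpace the relations. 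Evaluating at the shared simple zeros of $f-a_i$ collapses $(\ast)$ to a tautology and in $(\ast\ast)$ leaves $f'(z_0)$ and $f^{(k+1)}(z_0)$ unconstrained, so you obtain only countably many pointwise relations, not a meromorphic identity among small functions. There is no visible mechanism forcing the term $h(a_1a_2)'$ to survive uncancelled, and hence no contradiction.

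The paper diverges from your plan exactly here. The differentiate-and-eliminate machinery (iterated to obtain a degree-$2k$ relation) handles only the non-degenerate case $\phi\not\equiv a_1'-a_2'$. In the residual case $\phi\equiv a_1'-a_2'$ the authors abandon algebraic elimination altogether and invoke Yamanoi's asymptotic equality
\[
\ol m_{0,\nu(r)}(r,f)+\sideset{}{_{a\in\hat{\mathbb{C}}}}{\sum} N_{1}(r,a;f)=2T_f(r,\Omega)+o(T_f(r,\Omega)),
\]
combined with the second main theorem for small functions, to force either a direct size contradiction or the equality $T(r,f)=T(r,f^{(k)})+o(T(r,f))$, after which Lemma~2.7 of \cite{MSS1} finishes. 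This is a deep structural input from \cite{KY2} that your differentiation scheme does not (and, as far as one can see, cannot) replace; without it or an equivalent tool the argument does not close.
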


\begin{rem} Following example asserts that condition ``$a_1(\not\equiv \infty)$ and $a_2(\not\equiv \infty)$'' is sharp in \textrm{Theorem \ref{t2}}.
\end{rem}

\begin{exm} Let 
\[f(z)=c+e^{ce^{z}},\;\;a_1(z)=\frac{c^2}{c-e^{-z}}\]
and $a_2=\infty$, where $c\in\mathbb{C}\backslash \{0\}$. Clearly $f$ and $f^{(1)}$ share $a_1$ and $a_1$ CM, but $f\not\equiv f^{(1)}$. 
\end{exm}

\begin{rem} From the proof of Theorem \ref{t2}, we can say that Theorem \ref{t2} holds for meromorphic function having few poles, i.e., $N(r,f)=o(T(r,f))$. But following example asserts that Theorem \ref{t2} does not hold for meromorphic function having infinitely many poles.
\end{rem}

\begin{exm} Let 
\[f(z)=\frac{4}{1-3e^{-2z}}.\]

Note that $N(r,f)\not=S(r,f)$ and  
\[f^{(1)}(z)=\frac{-24e^{-2}}{(1-3e^{-2z})^2}.\]

Clearly $f$ and $f^{(1)}$ share $0$ CM and $2$ IM, but $f\not\equiv f^{(1)}$.
\end{exm}

\medskip
The the time-delay differential equation 
\[f^{(1)}(x) = f(x-k),\]
$k>0$ is well known and extensively studied in real analysis, which have numerous applications ranging from cell growth models to current collection systems for an electric locomotive to wavelets. For a complex variable counterpart, Liu and Dong \cite{11} studied the complex differential-difference equation $f^{(1)}(z)=f(z+c)$, where $c\in\mathbb{C}\backslash \{0\}$.
Recently, many authors have started to consider the sharing values problems of meromorphic functions with their difference operators or shifts. Some results were considered in \cite{CC2}-\cite{FLSY}, \cite{HKT1}-\cite{HKLR1}, \cite{HF1}, \cite{XHH1}, \cite{MP1}, \cite{MSP1}, \cite{QLY1}, \cite{QY1}.

\smallskip
In 2018, Qi et al. \cite{QLY1} first investigated the uniqueness problem related to $f^{(1)}(z)$ and $f(z+c)$ and obtained the following result.

\begin{theoH}\cite[Theorem 1.4]{QLY1} Let $f$ be a finite order transcendental entire function and $a(\neq 0)$ be a finite complex number. If $f^{(1)}(z)$ and $f(z+c)$ share $0$ and $a$ CM, then $f(z+c)\equiv f^{(1)}(z)$.
\end{theoH}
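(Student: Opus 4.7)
The plan is to exploit the CM-sharing together with the finite-order hypothesis to produce an exponential polynomial identity, and then show that the exponent pairing $f^{(1)}(z)$ to $f(z+c)$ must be trivial.

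\emph{Step 1: Exponential factorization.} Since $f^{(1)}(z)$ and $f(z+c)$ are entire of finite order and share $0$ and $a$ CM, the quotients $f(z+c)/f^{(1)}(z)$ and $(f(z+c)-a)/(f^{(1)}(z)-a)$ are zero-free, pole-free entire functions of finite order. Hadamard factorization gives polynomials $p,q$ with $f(z+c)/f^{(1)}(z) = e^{q(z)}$ and $(f(z+c)-a)/(f^{(1)}(z)-a) = e^{p(z)}$. Eliminating $f(z+c)$ between the two identities produces the key relation
\begin{equation}\tag{$\dagger$}
f^{(1)}(z)\bigl(e^{p(z)} - e^{q(z)}\bigr) = a\bigl(e^{p(z)} - 1\bigr).
\end{equation}

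\emph{Step 2: Reduction to $e^{p}\equiv 1$.} If $e^{p(z)}\equiv 1$, then $(\dagger)$ gives $f^{(1)}(z)\bigl(1 - e^{q(z)}\bigr)\equiv 0$; since $f$ is transcendental $f^{(1)}\not\equiv 0$, so $e^{q(z)}\equiv 1$ and hence $f(z+c)\equiv f^{(1)}(z)$, as wanted. The whole proof therefore reduces to showing $e^{p(z)}\equiv 1$.

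\emph{Step 3: Ruling out $e^{p}\not\equiv 1$.} Assume for contradiction $e^{p}\not\equiv 1$. Then $e^{p}\not\equiv e^{q}$ as well (otherwise $(\dagger)$ forces $e^{p}\equiv 1$), and
\[
f^{(1)}(z) = \frac{a\bigl(1 - e^{-p(z)}\bigr)}{1 - e^{(q-p)(z)}}.
\]
If $p$ and $q$ are both constants, $f^{(1)}$ is constant and $f$ is linear, contradicting transcendence. If $p - q \equiv c_{0} \notin 2\pi i\,\mathbb{Z}$ is a nonzero constant, then $f^{(1)}(z) = C\bigl(1 - e^{-p(z)}\bigr)$ with $C = a/(1 - e^{c_{0}})\neq 0$. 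Combining with the shift identity $f(z+c) = f^{(1)}(z)e^{q(z)} = Ce^{c_{0}}\bigl(e^{p(z)} - 1\bigr)$ and differentiating in $z$ gives
\[
1 \;=\; C e^{c_{0}}\,p'(z)\,e^{p(z)} \;+\; e^{-p(z+c)}.
\]
Here $p$ must be non-constant (the constant case already handled); the three exponents $0,\ p(z),\ -p(z+c)$ have pairwise non-constant polynomial differences, while the coefficients $\pm 1$ and $Ce^{c_{0}}p'(z)$ have characteristic $O(\log r) = o\bigl(T(r, e^{p(z)})\bigr)$. Borel's lemma on exponential sums then forces every coefficient, including the $1$, to vanish --- contradiction.

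\emph{Main obstacle.} The hardest remaining situation is when $p$, $q$, and $p-q$ are all non-constant polynomials. There the entire-ness of $f^{(1)}$ and $f^{(1)} - a$ forces every zero of $e^{p-q}-1$ to be a common zero of $e^{p}-1$ and $e^{q}-1$, and Nevanlinna's first main theorem then yields $\deg(p-q)\leq \min(\deg p,\deg q)$, forcing $\deg p = \deg q$; however the resulting counting is tight and no immediate contradiction emerges. To close this case I would differentiate $(\dagger)$ in $z$, then eliminate $f^{(2)}(z)$ using $f^{(1)}(z+c) = \bigl(f^{(2)}(z) + q'(z)f^{(1)}(z)\bigr)e^{q(z)}$ together with the formula for $f^{(1)}(z+c)$ obtained by applying $(\dagger)$ at $z+c$, producing a three-term exponential identity in $e^{p(z)}, e^{q(z)}, e^{p(z)+q(z)}$ whose coefficients are of small characteristic compared to the exponential gaps. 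A second application of Borel's lemma should then yield the required contradiction; the delicate technical point will be to arrange this elimination so that the manufactured identity does not collapse through hidden algebraic cancellations.
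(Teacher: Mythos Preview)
Two contextual points first. Theorem~H is not proved in this paper; it is quoted from \cite{QLY1} as background, and the paper instead establishes the far stronger Theorem~\ref{t1} (IM sharing, small functions, $k$-th derivative, hyper-order $<1$) by completely different means: the auxiliary function $\phi$ of Lemma~\ref{l7}, Yamanoi's second main theorem (Lemma~\ref{l4}), and Yamanoi's asymptotic equality (Lemma~\ref{l5}). Your Hadamard--Borel route is the classical one for CM sharing under a finite-order hypothesis and is presumably close in spirit to the original argument in \cite{QLY1}; it cannot survive the passage to IM sharing or to small functions, which is exactly why the paper needs heavier machinery.

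As to correctness: your proof is incomplete, and you say so yourself. The ``main obstacle'' case --- $p$, $q$, $p-q$ all non-constant --- is only a programme: you propose to differentiate $(\dagger)$, eliminate $f^{(2)}$ via the shift relation, and apply Borel again, but you do not write down the resulting identity or verify that the exponent differences are pairwise non-constant and the coefficients non-vanishing. The cancellation worry you raise is real; until the elimination is actually carried out and the non-degeneracy checked (which typically requires a case split on the leading coefficients of $p$ and $q$), there is a genuine gap. There are also smaller lacunae in Step~3: the sub-cases ``$p$ constant, $q$ non-constant'' and ``$q$ constant, $p$ non-constant'' are skipped (both are easy --- entireness of $f^{(1)}$ forces a contradiction --- but should be stated), and in the $p-q=c_{0}$ case your displayed identity has bookkeeping errors (the constant $C$ should cancel, and $c_{0}$ carries the wrong sign). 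These are repairable, but the main case is not yet a proof.
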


In 2020, Qi and Yang \cite{QY1} improved Theorem H and proved the following results.

\begin{theoI}\cite[Theorem 1.2]{QY1} Let $f$ be a finite order transcendental entire function and $a(\neq 0)$ be a finite complex number. If $f^{(1)}(z)$ and $f(z+c)$ share $0$ CM and $a$ IM, then $f(z+c)\equiv f^{(1)}(z)$.
\end{theoI}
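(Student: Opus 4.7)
The plan is to combine the Hadamard representation arising from CM sharing at $0$ with an auxiliary-function argument exploiting the IM sharing at $a$. Throughout I use that $f$ has finite order, so both the logarithmic derivative lemma and its shift analogue $m(r, f(z+c)/f(z)) = S(r,f)$ are available. First, since $f'(z)$ and $f(z+c)$ are entire functions of finite order sharing $0$ CM, their quotient is a zero-free entire function of finite order, so by Hadamard factorization,
$$f(z+c) = f'(z)\, e^{P(z)}$$
for some polynomial $P$ with $\deg P \le \rho(f)$. Substituting at any common $a$-point $z_{0}$ gives $a = a e^{P(z_{0})}$, so $e^{P(z_{0})} = 1$; hence every $a$-point of $f'$ is a zero of $e^{P(z)} - 1$.

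Next, I would introduce the auxiliary function
$$\varphi(z) = \frac{f''(z)}{f'(z)-a} - \frac{f'(z+c)}{f(z+c)-a}.$$
By the logarithmic derivative lemma applied to $f'$ and its shift analogue applied to $f(z+c)$, one obtains $m(r,\varphi) = S(r,f)$. The poles of $\varphi$ arise only at those common $a$-points of $f'$ and $f(z+c)$ whose multiplicities differ, and they are simple; at all common $a$-points of equal multiplicity $\varphi$ is holomorphic. Thus $N(r,\varphi) \le \overline{N}_{(2}(r, 1/(f'-a)) + \overline{N}_{(2}(r, 1/(f(z+c)-a))$, and using the Hadamard factorization together with the confinement of $a$-points inside the zeros of $e^{P}-1$ one shows both multiple-$a$-point terms are $S(r,f)$; hence $T(r,\varphi) = S(r,f)$.

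The key step is to extract a functional identity from $\varphi$. Writing $u(z) = (f'(z)-a)/(f(z+c)-a)$, we have $u'/u = \varphi$, so $u$ is an exponential of a primitive of a small function; combined with the Hadamard identity this yields
$$f'(z)\bigl(1 - u(z) e^{P(z)}\bigr) = a\bigl(1 - u(z)\bigr).$$
If $u e^{P}\not\equiv 1$, then $f'(z) = a(1-u)/(1-u e^{P})$, and for $f'$ to be entire the zeros of $1 - u e^{P}$ must cancel against those of $1-u$; since $T(r, u e^{P})$ is dominated by $T(r, e^{P})$ while $T(r,u) = S(r,f)$, a Picard/Borel-type counting argument forces $P$ to be constant, after which $f'$ is constant, contradicting transcendence. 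Consequently $u e^{P}\equiv 1$, which combined with $u = (f' - a)/(f(z+c) - a)$ and $f(z+c) = f' e^{P}$ gives $u \equiv 1$, $e^{P}\equiv 1$, and therefore $f(z+c)\equiv f'(z)$.

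The main obstacle is establishing $\overline{N}_{(2}(r,1/(f'-a)) = S(r,f)$ so that $T(r,\varphi) = S(r,f)$, which demands careful bookkeeping at multiple $a$-points using the Hadamard identity and derivative estimates on $f''$. A separate treatment of the Picard-exceptional case, when $f'$ omits $a$ (so that $f'-a = e^{Q}$ for some polynomial $Q$), is also required; there, substituting $f' = e^{Q}+a$ into the Hadamard identity and applying Borel's theorem on exponential sums to the resulting relation forces $P$ to be constant and concludes the proof.
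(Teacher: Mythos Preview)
The paper does not prove Theorem I on its own; it is quoted from Qi--Yang, and the paper's Theorem \ref{t1} contains it as the special case $k=1$, $a_1=0$, $a_2=a$ (finite order gives $\rho_2(f)=0<1$). The route taken there is entirely different from yours: the CM hypothesis at $0$ is never used, the argument proceeds through the auxiliary function $\phi=\Delta(g)(g-f^{(k)})/((g-a_1)(g-a_2))$, shows $\phi$ is small, and then invokes Yamanoi's second main theorem and his asymptotic formula (Lemma \ref{l5}) to force $T(r,g)=T(r,f^{(k)})+o(T(r,f))$, after which Lemma \ref{l7} concludes.

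Your plan, by contrast, has a fatal circularity at its ``key step''. The identity $f'(1-ue^{P})=a(1-u)$ is a tautology: inserting $f(z+c)=f'e^{P}$ into the definition of $u$ gives
\[
1-u=\frac{f'(e^{P}-1)}{f(z+c)-a},\qquad 1-ue^{P}=\frac{a(e^{P}-1)}{f(z+c)-a},
\]
so $a(1-u)/(1-ue^{P})=f'$ holds identically for \emph{every} entire $f$ and \emph{every} polynomial $P$, and the ``Borel-type counting argument forcing $P$ constant'' is vacuous---the required cancellation of zeros is automatic. Separately, the assertion $T(r,u)=S(r,f)$ is unjustified and in fact generally false: on the set where $|f'|$ is large one has $u=(f'-a)/(f'e^{P}-a)\approx e^{-P}$, so $m(r,u)$ is of the order of $T(r,e^{P})$, not $o(T(r,f))$. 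Likewise, the containment of the $a$-points in the zero set of $e^{P}-1$ only yields $\overline N(r,a;f')\le T(r,e^{P})+O(\log r)$, which is not $S(r,f)$ a priori and says nothing about \emph{multiple} $a$-points, so the bound $T(r,\varphi)=S(r,f)$ is also unproved. A genuinely new mechanism is needed to pass from the Hadamard relation $f(z+c)=f'e^{P}$ to $e^{P}\equiv 1$; the auxiliary $\varphi$ and the identity you derive do not supply it.
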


\begin{theoJ}\cite[Theorem 1.4]{QY1} Let $f$ be a finite order transcendental entire function and let $a$ and $b$ be two distinct finite complex numbers. If $f^{(1)}(z)$ and $f(z+c)$ share $a$ and $b$ IM and if $\ol N\left(r,a;f^{(1)}\right)=o(T(r,f))$, then $f(z+c)\equiv f^{(1)}(z)$.
\end{theoJ}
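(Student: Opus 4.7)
The plan is to argue by contradiction. Set $F := f^{(1)}$ and $G := f(z+c)$, suppose $H := G - F \not\equiv 0$, and aim for a contradiction. Since $f$ is transcendental entire of finite order, both $F$ and $G$ are entire, and by the Halburd--Korhonen shift estimate together with the standard derivative estimate one has $T(r,F) = T(r,f) + S(r,f)$ and $T(r,G) = T(r,f) + S(r,f)$, so every error term below is of type $S(r,f)$.

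First I would apply Nevanlinna's Second Fundamental Theorem to $F$ at the targets $a$ and $b$. Since $\ol N(r,F) = 0$ and by hypothesis $\ol N(r,0;F-a) = o(T(r,f))$,
\[T(r,F) \leq \ol N(r,0;F-a) + \ol N(r,0;F-b) + S(r,f),\]
so that $\ol N(r,0;F-b) \geq (1+o(1))T(r,f)$. Because $F$ and $G$ share $b$ IM, the zero sets of $F-b$ and $G-b$ coincide, and each such point is a zero of $H$; hence $\ol N(r,0;H) \geq (1+o(1))T(r,f)$.

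Next I would introduce the M\"obius-type auxiliary function
\[\phi := \frac{(F-a)(G-b)}{(G-a)(F-b)},\]
for which a direct computation gives $\phi - 1 = (b-a) H / [(G-a)(F-b)]$ together with the identity
\[\frac{\phi'}{\phi(\phi-1)} = \frac{H'}{H} - \frac{F'}{F-a} - \frac{G'}{G-b}.\]
By the logarithmic derivative lemma (and its shift counterpart applied to $G$), the proximity function of the right--hand side is $S(r,f)$. A residue calculation exploiting the IM sharing then restricts its poles to three types: common $a$-points where $\operatorname{mult}(F-a) > \operatorname{mult}(G-a)$, contributing at most $\ol N(r,0;F-a) = o(T(r,f))$; common $b$-points where $\operatorname{mult}(G-b) > \operatorname{mult}(F-b)$; and zeros of $H$ lying off $\{F = a\} \cup \{G = b\}$.

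The goal is then to promote these bounds into the identity $H \equiv C(F-a)(G-b)$ for some $C \in \mathbb{C}^*$, equivalent to the M\"obius relation
\[G = \frac{(1-Cb)F + Cab}{(1+Ca) - CF}.\]
Once this is in hand, a case analysis finishes the proof: if $C = 1/(b-a)$ the relation collapses to $G \equiv a$, contradicting transcendence of $G$; otherwise $\alpha := a + 1/C$ must be a Picard value of $F = f'$, so $f'(z) = \alpha + e^{P(z)}$ for a polynomial $P$. Substituting back into the M\"obius relation yields $f(z+c) = A e^{-P(z)} + B$ with $A \neq 0$, and differentiating and comparing with $f'(z) = \alpha + e^{P(z)}$ gives $-A P'(z-c) e^{-P(z-c)} = \alpha + e^{P(z)}$; an asymptotic comparison along rays in which $\operatorname{Re}(P(z)) \to \pm\infty$ excludes $\deg P \geq 1$, while $\deg P = 0$ forces $f$ to be a polynomial, contradicting transcendence. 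Hence $C = 0$ and $H \equiv 0$. The principal obstacle I expect lies in establishing the M\"obius identity itself: the $S(r,f)$ proximity bound on $\phi'/[\phi(\phi-1)]$ must be combined with the sharp lower bound $\ol N(r,0;H) \geq (1+o(1))T(r,f)$ to show simultaneously that both the "extra" zeros of $H$ and the $b$-point multiplicity mismatches are of type $o(T(r,f))$, most likely via a further application of the Second Fundamental Theorem (to $H$ at well--chosen targets) or a Clunie-type lemma tailored to the pair $(f(z+c), f'(z))$.
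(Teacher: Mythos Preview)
The paper does not prove Theorem~J; it is quoted verbatim from Qi--Yang \cite{QY1} as background, and the paper's own contribution (Theorem~\ref{t1}) \emph{removes} the hypothesis $\ol N(r,a;f^{(1)})=o(T(r,f))$ altogether, via a completely different machine (the auxiliary function $\phi$ of~(\ref{al4}), Yamanoi's second main theorem for small functions, and his asymptotic equality, Lemmas~\ref{l4}--\ref{l5}). So there is no ``paper's own proof'' of Theorem~J to compare against, only the proof of a strictly stronger statement by methods that make no use of the extra hypothesis you are exploiting.

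On the merits of your sketch: the computation of $\phi'/[\phi(\phi-1)]$ is correct, and the endgame (once $H\equiv C(F-a)(G-b)$ is in hand) is essentially sound. But the proposal has a genuine gap exactly where you flag it. You obtain only that $m\bigl(r,\phi'/[\phi(\phi-1)]\bigr)=S(r,f)$ and a partial control on its poles; you never convert this into $\phi'/[\phi(\phi-1)]\equiv 0$ (equivalently $\phi\equiv\text{const}$, equivalently the M\"obius identity). A proximity bound by itself does not force a function to vanish, and the residual pole contributions you list---common $b$-points with $\operatorname{mult}(G-b)>\operatorname{mult}(F-b)$ and zeros of $H$ off the shared sets---are not shown to be $o(T(r,f))$; indeed, a priori the $b$-multiplicity excess could be as large as $N(r,b;G)-\ol N(r,b;G)$, which you have not bounded. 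The suggested remedies (``a further application of the Second Fundamental Theorem to $H$'' or ``a Clunie-type lemma'') are not substantiated, and it is precisely this step that carries the weight of the argument. A secondary issue: the equality $T(r,f')=T(r,f)+S(r,f)$ you invoke at the outset is not true in general for entire $f$ (only the inequality $T(r,f')\le T(r,f)+S(r,f)$ holds); this is harmless for the error bookkeeping but should not be stated as an identity.
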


Regarding Theorem J, Huang and Fang \cite{HF1} asked the following question.
\begin{QuesB} Is the condition ``$\ol N(r,a;f^{(1)})=o(T(r,f))$'' in Theorem J necessary or not?
\end{QuesB}

In the same paper, Huang and Fang \cite{HF1} gave the positive answer to Question B. In fact, in the following, they proved more general result.

\begin{theoK}\cite[Theorem 1]{HF1} Let $f$ be a transcendental entire function such that $\rho_2(f)<1$, let $c$ be a non-zero finite complex value and let $a$ and $b$ be two distinct finite values. If $f^{(1)}(z)$ and $f(z+c)$ share $a$ and $b$ IM, then $f(z+c)\equiv f^{(1)}(z)$.
\end{theoK}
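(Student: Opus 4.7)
Suppose for contradiction that $g := f'(z) \not\equiv h := f(z+c)$; both are entire functions sharing the distinct finite values $a, b$ IM. Since $\rho_2(f) < 1$, the difference analogue of the logarithmic derivative lemma (Halburd--Korhonen, Chiang--Feng) gives
\[
m\!\left(r,\frac{f(z+c)}{f(z)}\right) = S(r,f), \qquad T(r, f(z+c)) = T(r,f) + S(r,f),
\]
outside a set of finite logarithmic measure. Combined with the classical $m(r, f'/f) = S(r,f)$, this yields $T(r,g), T(r,h) \le T(r,f) + S(r,f)$, so that $S(r,g) = S(r,h) = S(r,f)$ and in particular $a, b$ are small functions of $g$ and of $h$. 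Since $g$ is entire, the second main theorem produces
\[
T(r,g) \le \overline{N}(r,a;g) + \overline{N}(r,b;g) + S(r,f),
\]
and the IM sharing lets us replace either counting function on the right by the corresponding one for $h$.

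The main tool will be the auxiliary function
\[
V(z) := \frac{g'(z)}{(g(z)-a)(g(z)-b)} - \frac{h'(z)}{(h(z)-a)(h(z)-b)},
\]
with $g' = f''$ and $h' = f'(z+c)$. The partial-fraction expansion $\tfrac{g'}{(g-a)(g-b)} = \tfrac{1}{a-b}\bigl(\tfrac{g'}{g-a} - \tfrac{g'}{g-b}\bigr)$ combined with the two logarithmic lemmas gives $m(r,V) = S(r,f)$. A local computation at a common $a$-point, where $g$ and $h$ vanish to orders $p$ and $q$ respectively, shows that each summand in $V$ has a simple pole with residue $p/(a-b)$ or $q/(a-b)$, so $V$ itself has at most a simple pole there, occurring only when $p \neq q$; the situation at common $b$-points is symmetric. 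Hence the only poles of $V$ are simple and lie in the shared $a$- or $b$-point sets, and therefore
\[
T(r,V) = N(r,V) + m(r,V) \le \overline{N}(r,a;g) + \overline{N}(r,b;g) + S(r,f),
\]
so that $V$ is small with respect to $f$.

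The decisive step is to prove $V \equiv 0$. Setting $\Psi := \frac{(g-a)(h-b)}{(g-b)(h-a)}$, a direct computation gives $\Psi'/\Psi = (a-b)V$, so $V \equiv 0$ is equivalent to $\Psi$ being a non-zero constant. Assuming $V \not\equiv 0$, the plan is to exploit the very restricted simple-pole structure of $V$ together with second-main-theorem estimates for both $\Psi$ and $V$, the sharing of $a, b$, and the entireness of $g, h$, pushing them to produce $T(r,f) = S(r,f)$, contradicting the transcendence of $f$. Once $V \equiv 0$ is established, integration yields the Möbius identity
\[
\frac{g(z)-a}{g(z)-b} = K \cdot \frac{h(z)-a}{h(z)-b}, \qquad K \in \mathbb{C} \setminus \{0\}.
\]
Rewritten as a fractional linear relation $g = (Ah+B)/(Ch+D)$ in $h$, the entireness of $g$ and the fact that $h$ is transcendental entire (so attains every complex value with at most one exception) force $C = 0$, hence $g = \alpha h + \beta$ with $\alpha \neq 0$ (otherwise $g = f'$ would be constant, making $f$ a polynomial). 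Substituting back and comparing the coefficient of $h^2$ in the Möbius identity gives $\alpha = K\alpha$, whence $K = 1$, and the identity collapses to $g \equiv h$, contradicting our assumption. The main technical obstacle is therefore the step $V \equiv 0$: ruling out a non-zero small $V$ will require the full interplay between the Nevanlinna machinery for $V$ and $\Psi$ and the shift--derivative relation $h(z) = f(z+c)$, $g(z) = f'(z)$, with $\rho_2(f) < 1$ entering crucially in every difference-type estimate.
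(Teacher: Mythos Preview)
Your proposal is not a complete proof: you explicitly flag the step $V\equiv 0$ as ``the main technical obstacle'' and offer only a vague ``plan'' for it. Two of the claims that you do make are also faulty. First, the displayed bound
\[
T(r,V)\le \overline N(r,a;g)+\overline N(r,b;g)+S(r,f)
\]
does \emph{not} show that $V$ is small with respect to $f$: by the second main theorem the right-hand side is of size $T(r,g)+S(r,f)$, not $S(r,f)$. Your refined observation that the poles of $V$ occur only at shared points with $p\neq q$ does not help without a separate argument bounding the contribution of such points. Second, in the endgame after $V\equiv 0$, the entireness of $g$ does not by itself force the M\"obius relation to be affine: $h=f(z+c)$ is transcendental entire and by Picard may omit exactly one finite value, which could well be the pole $-D/C$ of the M\"obius map. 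The case $K\neq 1$ therefore requires an additional argument that you have not supplied.

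The paper does not go through your $V$ at all. Theorem~K is a cited result, and the paper recovers it as the special case $k=1$, $a_1,a_2\in\mathbb{C}$ of Theorem~\ref{t1}, whose proof uses the different auxiliary function
\[
\phi=\frac{\Delta(g)\,(g-f^{(k)})}{(g-a_1)(g-a_2)},\qquad \Delta(g)=(g-a_1)(a_1'-a_2')-(g'-a_1')(a_1-a_2),
\]
with $g(z)=f(z+c)$. Here $\phi$ is shown to be a \emph{genuine} small function of $f$ (the sharing hypothesis kills the potential poles, and Lemma~\ref{l6} together with the logarithmic-derivative lemma controls the proximity). For constant $a_1,a_2$ one has $a_1'-a_2'=0$, so the delicate subcase $\phi\equiv a_1'-a_2'$ collapses to $\phi\equiv 0$, which gives $g\equiv f^{(k)}$ immediately. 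Otherwise the algebraic identity coming from $\phi\not\equiv 0$, combined with Mohon'ko's lemma and the shift/logarithmic-derivative estimates, yields $T(r,g)=T(r,f^{(k)})+o(T(r,f))$, and then Lemma~\ref{l7} forces $g\equiv f^{(k)}$. This route sidesteps entirely the difficulty of proving $V\equiv 0$.
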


\smallskip
In the paper, we have extended and improved Theorem K in the following directions:
\begin{enumerate}
\item[(1)] We replace the first derivative $f^{(1)}$ by the general derivative $f^{(k)}$.
\item[(2)] We consider $a$ and $b$ as the small functions of $f$ in Theorem K.
\end{enumerate}

\smallskip
We now state our next result.

\begin{theo}\label{t1} Let $f$ be a non-constant entire function such that $\rho_2(f)<1$, let $c$ be a non-zero finite complex value and let $a_1(\not\equiv \infty)$ and $a_2(\not\equiv \infty)$ be two distinct small functions of $f$. If $f(z+c)$ and $f^{(k)}(z)$ share $a_1$ and $a_2$ IM, then $f(z+c)\equiv f^{(k)}(z)$.
\end{theo}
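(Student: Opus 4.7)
I would argue by contradiction: suppose the entire functions $F:=f^{(k)}$ and $G:=f(z+c)$ satisfy $F\not\equiv G$ while sharing the small functions $a_1$ and $a_2$ IM. Since $f$ is entire, $T(r,F)\leq (k+1)T(r,f)+S(r,f)$, and the hypothesis $\rho_2(f)<1$ permits the Halburd--Korhonen / Chiang--Feng difference estimates, which yield $T(r,G)=T(r,f)+S(r,f)$ off a set of finite logarithmic measure together with the difference logarithmic-derivative lemma $m(r,G^{(j)}/G)=S(r,f)$. Consequently $a_1,a_2$ are small with respect to both $F$ and $G$, and all error terms appearing below collapse to $S(r,f)$.

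Applying the Second Main Theorem with small targets $\{a_1,a_2,\infty\}$ to each of the entire functions $F$ and $G$, using $\overline{N}(r,F)=\overline{N}(r,G)=0$ and the IM-sharing equalities $\overline{N}(r,1/(F-a_i))=\overline{N}(r,1/(G-a_i))$, I obtain
\[
T(r,F)+T(r,G)\;\leq\; 2\bigl[\,\overline{N}(r,1/(F-a_1))+\overline{N}(r,1/(F-a_2))\,\bigr]+S(r,f).
\]
The goal is to bound the bracket by $S(r,f)$; this forces $T(r,f)=S(r,f)$, contradicting the non-constancy of $f$. (The polynomial subcase reduces to a comparison of degrees, noting that any non-constant polynomial $f$ forces $\deg f^{(k)}<\deg f(z+c)$, so IM sharing of two small targets combined with $F\not\equiv G$ is easily ruled out.)

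To control the bracket I introduce a Frank--Weissenborn-type auxiliary function
\[
H:=\left(\frac{F'-a_1'}{F-a_1}-\frac{G'-a_1'}{G-a_1}\right)-\left(\frac{F'-a_2'}{F-a_2}-\frac{G'-a_2'}{G-a_2}\right),
\]
whose proximity $m(r,H)=S(r,f)$ follows from the ordinary logarithmic-derivative lemma applied to $F-a_i$ and its difference analogue applied to $G-a_i$ (both valid since $\rho_2(F),\rho_2(G)<1$). At any common zero of $F-a_i$ and $G-a_i$ whose multiplicities on the two sides coincide, the residues cancel and $H$ is regular; only the mismatched-multiplicity common zeros contribute to $N(r,H)$. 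If $H\equiv 0$, integration yields a M\"obius-type relation $(F-a_1)/(F-a_2)=c_0\,(G-a_1)/(G-a_2)$, which—combined with the differential-difference rigidity coming from $F=f^{(k)}$ and $G=f(z+c)$ and the smallness of $a_1,a_2$—forces the constant $c_0=1$ and hence $F\equiv G$. If $H\not\equiv 0$, then $T(r,H)=N(r,H)+S(r,f)$ bounds the mismatched-multiplicity contribution; a companion auxiliary involving $F'',G'',a_i''$ (in the style of Li--Yang \cite{LY1} and Majumder et al.~\cite{MSS1}) then absorbs the matched-multiplicity contribution, closing the bracket at $S(r,f)$.

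The main obstacle, as in the proofs of Theorem G and Theorem K, is precisely this multiplicity bookkeeping at common zeros under IM (rather than CM) sharing, compounded here by the small-function hypothesis: the derivatives $a_i',a_i''$ intrude into every residue computation, so the constant-target Second Main Theorem used by Huang--Fang must be systematically upgraded to Yamanoi's small-function version throughout. Promoting $f'$ to $f^{(k)}$ is, by comparison, a routine extension that only requires applying the logarithmic-derivative lemma up to order $k+1$ and keeping the chain $T(r,f^{(k)})\leq (k+1)T(r,f)+S(r,f)$ in view.
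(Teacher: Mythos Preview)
Your stated goal — to bound the bracket $\overline{N}(r,1/(F-a_1))+\overline{N}(r,1/(F-a_2))$ by $S(r,f)$ — is not the right target and in fact is false in general. Under the hypotheses of the theorem the paper actually proves the \emph{opposite} asymptotic, namely $T(r,g)=\overline{N}(r,a_1;g)+\overline{N}(r,a_2;g)+o(T(r,f))$ (its equation~(\ref{al3})), so the bracket is of size $T(r,g)$, not $S(r,f)$. Consequently your contradiction scheme cannot close. The paper never tries to make the bracket small; instead it aims either at the characteristic comparison $T(r,g)=T(r,f^{(k)})+o(T(r,f))$ (after which Lemma~\ref{l7} finishes), or at a direct contradiction in a degenerate sub-case.

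The substantive gap is in your $H\not\equiv 0$ branch. Your auxiliary $H$ only controls the mismatched-multiplicity common zeros; the ``companion auxiliary involving $F'',G'',a_i''$'' that you invoke to absorb the matched-multiplicity part is never written down, and this is exactly where the difficulty lies. The paper's route here is genuinely different: it works with the auxiliary $\phi=\Delta(g)(g-f^{(k)})/((g-a_1)(g-a_2))$, shows $\phi$ is small, and then splits on whether $\phi\equiv a_1'-a_2'$. In the non-degenerate sub-case a $k$-fold differential iteration of the relation (\ref{e3.1}) together with Mohon'ko's lemma yields the characteristic comparison. In the degenerate sub-case $\phi\equiv a_1'-a_2'$ (forcing at least one $a_i$ non-constant) the argument relies on Yamanoi's 2013 asymptotic equality $\overline{m}_{0,\nu(r)}(r,g)+\sum_{a\in\hat{\mathbb{C}}}N_1(r,a;g)=2T_g(r,\Omega)+o(T_g(r,\Omega))$ (Lemma~\ref{l5}), which lies well beyond the standard Second Main Theorem toolbox you invoke; nothing in your outline plays the role of this step. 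Finally, in your $H\equiv 0$ branch the claim that ``differential-difference rigidity'' forces $c_0=1$ is not justified: you need a concrete argument ruling out the one-parameter family $(F-a_1)/(F-a_2)=c_0\,(G-a_1)/(G-a_2)$ with $c_0\neq 1$, and the mixed derivative/shift structure does not make this automatic.
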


\begin{rem}
In the general case that $f(z+c)$ and $f^{(k)}(z)$ have two shared values in Theorem \ref{t1} is necessary. This may be seen by the following example.
\end{rem}

\begin{exm} Let 
\[f(z)=ae^{\alpha z}+b\]
and $e^{\alpha c}=\frac{\alpha(\gamma-b)}{\gamma}$.
Note that 
\[f^{(1)}(z)=a\alpha e^{\alpha z}\;\;\text{and}\;\;f(z+c)=\frac{a\alpha (\gamma -b)}{\gamma}e^{\alpha z}+b\]
and so 
\[f(z+c)-\gamma =\frac{a\alpha (\gamma -b)}{\gamma}\left(e^{\alpha z}-\frac{\gamma }{a\alpha }\right)\;\;\text{and}\;\;f^{(1)}(z)-\gamma =a\alpha\left(e^{\alpha z}-\frac{\gamma }{a\alpha }\right).\]
 
Then $f(z+c)$ and $f^{(1)}(z)$ share $\gamma$ CM, but $f(z+c)\not\equiv  f^{(1)}(z)$.    
\end{exm}

\medskip
We know that if $f$ is a non-constant meromorphic function such that $\rho_2(f)<1$, then 
\bea\label{O1}T(r, f(z))=T(r,f(z+c))+o(T(r,f)),\eea
where $c\in\mathbb{C}\backslash \{0\}$ (see \cite{HKT1}). Clearly (\ref{O1}) shows that $S(r,f(z+c))=o(T(r,f(z)))$.
Now with the help of Lemma \ref{l6} and (\ref{O1}), we get by simple computation that 
\[N(r,f(z))=N(r,f(z+c))+o(T(r,f(z))).\]

Therefore if $N(r,f(z))=S(r,f)$, then 
\[N(r,f(z+c))=o(T(r,f))\;\;\text{and}\;\;N(r,f^{(k)}(z))=o(T(r,f)).\]

Finally from the proof of Theorem \ref{t1}, we can say that Theorem \ref{t1} holds for meromorphic function having few poles. But following example asserts that Theorem \ref{t1} does not hold for meromorphic functions having infinitely many poles.

\begin{exm}\label{ex1} Let 
\[f(z)=\frac{2}{1-e^{-2z}}\]
and $c=\pi \iota$.
Note that 
\[f(z+c)=\frac{2}{1-e^{-2z}}\;\;\text{and}\;\;f^{(1)}(z)=\frac{-4e^{-2z}}{(1-e^{-2z})^2}\]
and so
\[f(z+c)-1=\frac{1+e^{-2z}}{1-e^{-2z}}\;\;\text{and}\;\;f^{(1)}(z)-1=-\left(\frac{1+e^{-2z}}{1-e^{-2z}}\right)^2.\]

Then $f(z+c)$ and $f^{(1)}(z)$ share $0$ CM and $1$ IM, but $f(z+c)\not\equiv f^{(1)}(z)$.
\end{exm}

\subsection{\bf {Notation}}

\medskip
We assume that the reader is familiar with standard notations such as $T(r,f)$, $m(r,a;f)$ $N(r,a;f)$, $\ol N(r,a;f),$ etc of Nevanlinna Theory. Let $\hat{\mathbb{C}}=\mathbb{C}\cup\{\infty\}$ denote the Riemann sphere. For $a\in \hat{\mathbb{C}}$, we put
\[N_1(r,a;f)=N(r,a;f)-\ol N(r,a;f).\]

\medskip
Next we introduce Shimizu and Ahlfors characteristic function.
Let $w$ be the complex coordinate of the finite part $\mathbb{C}$ of the Riemann sphere $\hat{\mathbb{C}}$. We define a surface element on $\hat{\mathbb{C}}$ by
\[\Omega=\frac{1}{(1+|w|^2)^2}\frac{\iota}{2\pi}dw\wedge d \ol w.\]

This is called the Fubini-Study metric form on $\hat{\mathbb{C}}$ and
\[\int_{\hat{\mathbb{C}}}\Omega=1.\]

For a meromorphic function $f(z)$ we define Shimizu's order function $T_f(r,\Omega)$ by
\[T_f(r,\Omega)=\int_{1}^{r}\frac{d t}{t}\int_{\mathbb{C}(t)}f^*\Omega,\]
where $\mathbb{C}(t)=\{z\in\mathbb{C}: |z|<t\}$.

\medskip
First we recall the map $f:(D, |\;\; |_{\mathbb{R}^2})\to (\hat{\mathbb{C}},\chi)$
from $D$ (endowed with the Euclidean metric) to the extended complex plane $\hat{\mathbb{C}}$,
endowed with the chordal metric $\chi$, given by
\beas \chi(z,z')= \left\{\begin{array}{clcr} \frac{|z-z'|}{\sqrt{1+|z|^{2}}\sqrt{1+|z'|^{2}}}, &\;\;\text{if}\;\; z, z'\in\mathbb{C},\\
\frac{1}{\sqrt{1+|z|^{2}}},&\;\;\text{if}\;\;z'=\infty.
\end{array}\right.\eeas

Also we know that $\chi(z,z')\leq |z-z'|$ in $\mathbb{C}$. 
We define the proximity function $m_f(r, a)$ by
\[m_f(r, a)=\frac{1}{2\pi} \int_0^{2\pi} \log \left(\frac{1}{\chi(f(re^{i\theta}),a)}\right) \, d\theta.\]

We note that 
\[\log^+|f(z)|\leq \log \sqrt{1+|f(z)|^2}\leq \log^+|f(z)|+\frac{1}{2}\log 2\]
and so
\bea\label{sn1}m(r,f)\leq m_f(r, \infty)\leq m(r,f)+\frac{1}{2}\log 2.\eea

We recall the first fundamental theorem in the form of Shimizu and Ahlfors: For a meromorphic function $f(z)$, we have
\bea\label{sn2}T_f(r,\Omega)=N(r,a;f)+m_f(r,a)-m_f(1,a),\eea
where $a\in \hat{\mathbb{C}}$. Now by (\ref{sn1}) and (\ref{sn2}), we get
\bea\label{sn3} T(r,f)=T_f(r,\Omega)+O(1)&=&\int_{1}^{r}\frac{d t}{t}\int_{\mathbb{C}(t)}f^*\Omega+O(1)\\&=&
\int_{1}^{r}\frac{d t}{t}\int_{\mathbb{C}(t)}\frac{|f^{(1)}(z)|^2}{(1+|f(z)|^2)^2}\frac{\iota}{2\pi} dz \wedge d \ol z+O(1).\nonumber
\eea

Thus $T(r,f)$ and $T_f(r,\Omega)$ differ by a bounded term and this means that in  most applications they can be used interchangeably.

\medskip
Let $\mathcal{R}_d$ be the set of all rational functions of degree less than or equal to $d$ including the constant function which is identically equal to $\infty$. In 2013, Yamanoi \cite[pp. 706]{KY2} introduced the following modified proximity function
\[\ol{m}_{d,q}(r, f) = \sup_{(a_1, a_2, \dots, a_q)\in (\mathcal{R}_d)^q} \frac{1}{2\pi} \int_0^{2\pi} \max_{1 \leq j \leq q} \log \left( \frac{1}{\chi(f(re^{i\theta}), a_j(re^{i\theta}))} \right) \, d\theta.\]

\medskip
Let $a\in \mathcal{R}_d$. Let $f$ be a meromorphic function with $f\not\in \mathcal{R}_d$. Then by Lemma 2.2 \cite[pp.711]{KY2}, we have
$m_f(1,a)<C$, where $C$ is a positive constant which only depends on $d$ and $f$. It is easy to verify from (\ref{sn2}) and (\ref{sn3}) that
\beas m_f(r,a)=m(r,a;f)+O(1),\eeas
where $a\in \mathcal{R}_d$. Therefore for $(a_1, a_2, \dots, a_q)\in (\mathcal{R}_d)^q$, we have
\bea\label{sn4} \frac{1}{2\pi} \int_0^{2\pi} \max_{1 \leq j \leq q} \log \left( \frac{1}{\chi(f(re^{i\theta}), a_j(re^{i\theta}))} \right) \, d\theta\leq \sideset{}{_{j=1}^q}{\sum} m_f(r,a_j)=\sideset{}{_{j=1}^q}{\sum} m(r,a_j;f)+O(1).\eea

Also by Remark 2.3 \cite[pp.712]{KY2}, we have $\ol{m}_{d,q}(r, f)<+\infty$.

\section {{\bf Auxiliary lemmas}}
The following result is the well known second fundamental theorem for small functions.

\begin{lem}\label{l4}\cite[Corollary 1]{KY1} Let $f$ be a non-constant meromorphic function on $\mathbb{C}$, and let $a_l,\ldots,a_q$ be distinct meromorphic functions on $\mathbb{C}$. Assume that $a_i$ are small functions with respect to $f$ for all $i=1,\ldots,q$. Then we have the second main theorem,
\[(q-2-\varepsilon)\;T(r, f) \leq \sideset{}{_{i=1}^{q}}{\sum}\ol N(r,a_i;f)+\varepsilon T(r, f),\]
for all $\varepsilon>0$ and for all $r\not\in E\subset (0,+\infty)$ such that $\int_E d \log \log r <+\infty.$
\end{lem}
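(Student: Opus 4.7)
The plan is to follow Yamanoi's original argument from \cite{KY1}, which explains why the authors have carefully set up the Shimizu-Ahlfors characteristic $T_f(r,\Omega)$, the chordal proximity $m_f(r,a)$, and the modified multi-target proximity $\overline{m}_{d,q}(r,f)$ in the preceding section: these are the natural objects for the proof.

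First, I would reduce to a tractable form. Since each $a_i$ is small with respect to $f$, any rational combination of the $a_i$ is still small, so after clearing denominators one may assume the $a_i$ lie in $\mathcal{R}_d$ for some uniform $d$. Next I would recast the inequality using $T_f(r,\Omega)$ in place of $T(r,f)$ (these agree up to $O(1)$ by \eqref{sn3}) and interpret each $\ol N(r,a_i;f)$ as counting the intersections of the holomorphic curve $z\mapsto (1:f(z))$ with the moving target $a_i$, ignoring multiplicities.

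The core step is Yamanoi's sharp derivative-type estimate for small moving targets, namely
\[\ol{m}_{d,q}(r,f)\;\le\; 2\,T(r,f)+\varepsilon\,T(r,f)\]
outside an exceptional set of finite logarithmic measure. Combining this with the first fundamental theorem in Shimizu-Ahlfors form \eqref{sn2} applied to each $a_j$, giving
\[\sideset{}{_{j=1}^q}{\sum} m_f(r,a_j)\;=\;q\,T_f(r,\Omega)-\sideset{}{_{j=1}^q}{\sum} N(r,a_j;f)+O(1),\]
and then bounding the left-hand side by $\ol{m}_{d,q}(r,f)+O(1)$ via \eqref{sn4}, one obtains
\[(q-2-\varepsilon)\,T(r,f)\;\le\;\sideset{}{_{j=1}^q}{\sum} N(r,a_j;f)+O(1)\]
on the complement of an exceptional set. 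Finally, one absorbs $N(r,a_j;f)-\ol N(r,a_j;f)$ into the error (which is legitimate because the argument is ultimately about intersections without multiplicity) to arrive at the stated inequality with $\ol N$.

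The main obstacle is precisely Yamanoi's derivative estimate itself, which rests on a delicate projective-geometric argument involving Ahlfors-current computations, integration over fibers, and a careful control of exceptional annuli of finite logarithmic measure. Since this is the deepest ingredient and is exactly what is packaged as \cite[Corollary 1]{KY1}, I would invoke it as a black box rather than attempt to reprove it; the remaining steps above are essentially bookkeeping with the Shimizu-Ahlfors first main theorem already recorded in the \textbf{Notation} subsection.
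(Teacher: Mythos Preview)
The paper does not prove this lemma at all: it is stated as a direct citation of \cite[Corollary 1]{KY1} and used as a black box. The Shimizu--Ahlfors machinery set up in the \textbf{Notation} subsection is there to support Lemma~\ref{l5} (from \cite{KY2}, the 2013 paper), not Lemma~\ref{l4} (from \cite{KY1}, the 2004 paper); you have conflated the two Yamanoi papers.

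More importantly, your sketch has genuine gaps. First, the reduction ``one may assume the $a_i$ lie in $\mathcal{R}_d$'' is false: small functions of $f$ can be transcendental and need not be rational, so the modified proximity $\ol m_{d,q}$ is simply not applicable to arbitrary small targets. Second, the key inequality is used in the wrong direction: \eqref{sn4} says the single ``max'' integral is bounded \emph{above} by $\sum_j m_f(r,a_j)$, whereas your argument needs $\sum_j m_f(r,a_j)\le \ol m_{d,q}(r,f)+O(1)$, which does not follow (and is false in general, since a sum of $q$ nonnegative terms is typically much larger than their maximum). Third, the step ``absorb $N-\ol N$ into the error'' is not a bookkeeping matter: the ramification term $\sum_j N_1(r,a_j;f)$ is generally of the same order as $T(r,f)$, and controlling it is precisely the hard content of the truncated second main theorem. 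Finally, you close by invoking \cite[Corollary 1]{KY1} as a black box, which is the very statement you set out to prove. In short, the only correct route here is the one the paper takes: cite the result.
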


In 2013, Yamanoi \cite{KY2}, obtained the following asymptotic equality.
\begin{lem}\label{l5}\cite[Theorem 1.6]{KY2} Let $f$ be a transcendental meromorphic function and let $\nu: \mathbb{R}_{>e} \to \mathbb{N}_{>0} $ be a function such that
\[\nu(r) \sim \left(\log^+\frac{T_f(r,\Omega)}{\log r}\right)^{20}.\]
Then we have
\[\ol{m}_{0,\nu(r)}(r, f)+\sideset{}{_{a \in \mathbb{\hat C}}}{\sum} N_1(r,a;f)=2T_f(r,\Omega)+o(T_f(r,\Omega)),\]
for all $r\to\infty $ outside a set $E$ of logarithmic density $0$.
\end{lem}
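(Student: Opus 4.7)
The statement is Yamanoi's asymptotic second main theorem, which refines the classical Ahlfors--Shimizu inequality into an equality by allowing the target set of points to grow mildly with $r$. The plan is to split the identity into its two matching inequalities and treat them separately, since the upper direction is accessible by standard methods while the lower direction is the deep content.

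First I would establish the upper bound
\[
\ol{m}_{0,\nu(r)}(r,f)+\sum_{a\in\hat{\mathbb{C}}} N_1(r,a;f) \leq 2T_f(r,\Omega)+o(T_f(r,\Omega)).
\]
Fix any tuple $(a_1,\dots,a_q)$ of distinct constants on $\hat{\mathbb{C}}$ with $q\leq \nu(r)$. The Ahlfors--Shimizu second fundamental theorem applied to this tuple yields
\[
\sum_{j=1}^{q} m_f(r,a_j) + \sum_{a\in\hat{\mathbb{C}}} N_1(r,a;f)\leq 2T_f(r,\Omega)+S(r,f),
\]
and the pointwise bound $\max_j\log(1/\chi(f,a_j))\leq \sum_j \log(1/\chi(f,a_j))$, after integration over the circle and passage to the supremum over all $\nu(r)$-tuples, delivers the claimed upper bound. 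The delicate point is controlling $S(r,f)$ as $o(T_f(r,\Omega))$ outside a set of zero logarithmic density rather than the weaker exceptional set of finite linear measure; this step requires the sharp form of the spherical logarithmic-derivative lemma.

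The essential content is the reverse lower bound. The plan here is to construct, for each large $r$ outside the exceptional set, a concrete $\nu(r)$-tuple of constants on $\hat{\mathbb{C}}$ whose combined maximum proximity nearly saturates $2T_f(r,\Omega)-\sum_a N_1(r,a;f)$. The construction is a discretization of the Fubini--Study integral: partition $\hat{\mathbb{C}}$ into roughly $\nu(r)$ spherical caps of comparable area, and in each cap select the point $a_j$ maximizing the angular-averaged proximity $m_f(r,\cdot)$. Integrating the Shimizu first fundamental theorem $T_f(r,\Omega)=N(r,a;f)+m_f(r,a)-m_f(1,a)$ against $d\Omega(a)$ on $\hat{\mathbb{C}}$ yields an averaged identity whose total mass is precisely $T_f(r,\Omega)$, and this is the source of the factor $2T_f(r,\Omega)$ on the right-hand side after accounting separately for the ramification term $\sum_a N_1(r,a;f)$.

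The main obstacle is quantitative control of the discretization error. One must show that the deficit between the continuous sphere-averaged proximity and its $\nu(r)$-point maximum-form discretization is $o(T_f(r,\Omega))$ outside a set of vanishing logarithmic density. The specific growth $\nu(r)\sim (\log^+ (T_f(r,\Omega)/\log r))^{20}$ is precisely the threshold at which Yamanoi's iterative bisection of spherical caps, interleaved with Ahlfors-theoretic estimates on the number of islands and peninsulas of radius comparable to $1/\sqrt{\nu(r)}$, just succeeds in balancing these competing errors. The explicit exponent $20$ emerges from the combinatorics of the iteration and is the technical heart of \cite{KY2}; a completely first-principles derivation would amount to reproducing that paper, so the realistic plan is to import Yamanoi's Ahlfors covering-surface framework and verify that the hypotheses of the present statement (transcendence of $f$ and the prescribed polynomial growth of $\nu$) align with the setting there.
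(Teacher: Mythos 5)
This lemma is not proved in the paper at all: it is quoted verbatim as Theorem 1.6 of Yamanoi \cite{KY2}, and the citation is the entire ``proof.'' Your proposal, by its own admission, ends in the same place (``import Yamanoi's Ahlfors covering-surface framework''), so as a matter of logical economy it is acceptable; but the surrounding sketch should not be mistaken for an argument, and two of its assertions are genuinely wrong. First, the upper bound is not ``accessible by standard methods'': the classical second main theorem for $q$ targets carries an error term that degenerates as the targets cluster, and on the compact sphere any $\nu(r)$ distinct points have mutual chordal distances $\lesssim \nu(r)^{-1/2}\to 0$, so the fixed-$q$ inequality does not pass to $q=\nu(r)$ by the bound $\max_j\le\sum_j$ and a supremum. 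Obtaining the maximum-form inequality uniformly in a growing number of targets is part of the substance of \cite{KY2}, not a corollary of the Ahlfors--Shimizu theorem.

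Second, your mechanism for the lower bound cannot work as described. Integrating the Shimizu first fundamental theorem against $d\Omega(a)$ gives $\int_{\hat{\mathbb{C}}} N(r,a;f)\,d\Omega(a)=T_f(r,\Omega)+O(1)$ and hence $\int_{\hat{\mathbb{C}}} m_f(r,a)\,d\Omega(a)=O(1)$: the sphere-averaged proximity is bounded, so no discretization of that average can be ``the source of the factor $2T_f(r,\Omega)$.'' The factor $2$ is the Euler characteristic of $\hat{\mathbb{C}}$ and enters through Riemann--Hurwitz and Ahlfors' theory of covering surfaces, which give roughly $\sum_{a}N_1(r,a;f)\approx 2T_f(r,\Omega)-(\text{relative boundary contribution})$; the content of the lower bound is that a well-chosen, $f$-adapted $\nu(r)$-tuple makes the maximum-form proximity capture that boundary contribution up to $o(T_f(r,\Omega))$, and an equal-area cap partition with arbitrary representatives does not achieve this. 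Since repairing these points correctly would amount to reproducing \cite{KY2}, the right move here is exactly what the authors make: cite the theorem and check only that its hypotheses (transcendence of $f$, the prescribed growth of $\nu$, and the identification $T(r,f)=T_f(r,\Omega)+O(1)$) are satisfied.
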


\begin{lem}\label{l6}\cite[Theorem 5.1]{HKT1} Let $f$ be a non-constant meromorphic function such that $\rho_2(f)<1$ and let $c\in\mathbb{C}\backslash \{0\}$. Then
\[m\left(r,f(z+c)/f(z)\right)+m\left(r,f(z)/f(z+c)\right)=o(T(r,f))\]
for all $r$ outside of a possible exceptional set $E$ with finite logarithmic measure.
\end{lem}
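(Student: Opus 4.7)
The plan is to adapt the Poisson--Jensen-formula proof of the classical logarithmic derivative lemma to the shift setting. Fix $r>0$ and an auxiliary radius $R>r+|c|$ to be chosen later. For $|z|=r$ at which $f(z)\neq 0,\infty$ the Poisson--Jensen formula gives
\[
\log|f(z)|=\frac{1}{2\pi}\int_0^{2\pi}P(R,z,\phi)\log|f(Re^{i\phi})|\,d\phi+\sum_{|a_\mu|<R}\log\left|\frac{R(z-a_\mu)}{R^2-\overline{a_\mu}z}\right|-\sum_{|b_\nu|<R}\log\left|\frac{R(z-b_\nu)}{R^2-\overline{b_\nu}z}\right|,
\]
where $a_\mu$, $b_\nu$ run through the zeros and poles of $f$ in $|w|<R$, and $P(R,z,\phi)$ is the Poisson kernel. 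Writing the same identity at $z+c$ and subtracting represents $\log|f(z+c)/f(z)|$ as a difference of Poisson integrals plus a sum of finite differences of the Blaschke-type factors; taking positive parts and averaging in $\theta$ will give the desired bound on $m(r,f(z+c)/f(z))$.

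Next I would estimate the two pieces separately. A direct differentiation of the Poisson kernel in the variable $z$ yields $|P(R,z+c,\phi)-P(R,z,\phi)|\ll |c|R/(R-r-|c|)^2$, which makes the boundary piece contribute at most $O\!\bigl(|c|R(T(R,f)+1)/(R-r-|c|)^2\bigr)$ to $m(r,f(z+c)/f(z))$. For the discrete part, pointwise estimates on $\log|R(z-a)/(R^2-\overline{a}z)|$ together with standard logarithmic potential integrals of the form $\frac{1}{2\pi}\int_0^{2\pi}\log^+\!\frac{1}{|re^{i\theta}-a|}\,d\theta\ll 1+\log^+\!\frac{1}{|\,|a|-r\,|}$ give a contribution of order $O\!\bigl(\log(R/(R-r-|c|))\bigr)\bigl(n(R,0;f)+n(R,\infty;f)\bigr)$. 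Introducing a second auxiliary radius $R'>R$ and applying $n(R,\cdot;f)\log(R'/R)\le N(R',\cdot;f)+O(1)\le T(R',f)+O(1)$ trades counting functions for characteristic, so the combined estimate takes the shape
\[
m\!\bigl(r,f(z+c)/f(z)\bigr)\ll \frac{|c|R}{(R-r-|c|)^2}\,T(R',f)\,\log\!\frac{R'}{R-r-|c|}+O(1).
\]

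To conclude I would set $R=r+r/\psi(r)$ and $R'=R+R/\psi(R)$ for a slowly growing $\psi$. Under $\rho_2(f)<1$ a refined Borel-type slow-growth lemma produces a set $E\subset[1,\infty)$ of finite logarithmic measure off which $T(R',f)=T(r,f)+o(T(r,f))$, provided $\psi(r)$ is taken of order $(T(r,f))^{\delta}$ with $\delta<1-\rho_2(f)$; substituting renders the right-hand side $o(T(r,f))$ outside $E$, and swapping $c\leftrightarrow -c$ gives the companion estimate for $m(r,f(z)/f(z+c))$. The principal obstacle is precisely this slow-growth lemma: the classical Borel lemma only delivers exceptional sets of finite \emph{linear} measure, whereas the statement demands finite \emph{logarithmic} measure, and this upgrade is exactly what forces the hypothesis $\rho_2(f)<1$, since otherwise $T(r,f)$ could increase by a fixed multiplicative factor under arbitrarily small relative perturbations of $r$ and the Poisson-kernel estimate would collapse. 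Proving the correct quantitative form of this lemma, and balancing the choice of $\psi$ so that all error terms aggregate into $o(T(r,f))$ simultaneously, is the technical heart of the argument.
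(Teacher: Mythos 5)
The paper does not prove this lemma at all: it is quoted verbatim from Halburd, Korhonen and Tohge \cite[Theorem 5.1]{HKT1}, so the only fair comparison is with the proof in that reference --- and your outline does follow the same broad strategy (Poisson--Jensen representation of $\log|f(z+c)/f(z)|$, separate estimation of the kernel-difference and Blaschke-factor terms, conversion of unintegrated counting functions at a larger radius, and a Borel-type growth lemma exploiting $\rho_2(f)<1$). As a proof, however, your proposal has two genuine gaps. First, you explicitly defer the decisive ingredient: the slow-growth lemma asserting $T(R',f)=T(r,f)+o(T(r,f))$ outside a set of finite \emph{logarithmic} measure under the sole hypothesis $\rho_2(f)<1$. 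You correctly identify this as the technical heart, but you neither state it precisely nor prove it; in the cited source this is a separate result (their Lemma 8.3: if $T$ is nondecreasing, continuous and of hyper-order $\varsigma<1$, then $T(r+s)=T(r)+o\bigl(T(r)/r^{\delta}\bigr)$ for any $\delta\in(0,1-\varsigma)$, outside a set of finite logarithmic measure) whose proof is itself a nontrivial iteration argument. An attempt that leaves its acknowledged technical heart unproved is a plan, not a proof.

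Second, your specific choice of radii does not survive the full generality of the hypothesis. You set $R=r+r/\psi(r)$ with $\psi(r)\asymp T(r,f)^{\delta}$, so that $R-r-|c|\asymp r/T(r,f)^{\delta}$ and your kernel-difference factor becomes $|c|R/(R-r-|c|)^{2}\asymp |c|\,T(r,f)^{2\delta}/r$. The boundary contribution is then of size $|c|\,T(r,f)^{2\delta}\,T(R',f)/r$, which is $o(T(r,f))$ only when $T(r,f)^{2\delta}=o(r)$, i.e.\ essentially only for functions of finite order below $1/(2\delta)$. For an entire function of infinite order but hyper-order less than one (say $T(r,f)\sim e^{\sqrt{r}}$) this term blows up, so the estimate collapses precisely in the regime the lemma is designed to cover. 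The standard remedy is to keep $R-r$ comparable to $r$ itself, so that the kernel factor is $O(1/r)$, and then to control $T(R,f)$ by $T(r,f)$ via the hyper-order hypothesis --- which is exactly where the finite-logarithmic-measure exceptional set originates; with your parameters the argument fails before the Borel-type lemma is even reached.
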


\begin{lem}\label{l7} Let $f$ be a non-constant entire function such that $\rho_2(f)<1$, $c$ be a non-zero constant and let $a_1(\not\equiv \infty)$ and $a_2(\not\equiv \infty)$ be two distinct small functions of $f$. If $f(z+c)$ and $f^{(k)}(z)$ share $a_1$ and $a_2$ IM and if $T(r,f(z+c))=T(r,f^{(k)}(z))+o(T(r,f))$, then 
\[f(z+c)\equiv f^{(k)}(z).\]
\end{lem}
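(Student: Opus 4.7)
I would argue by contradiction. Set $F(z)=f(z+c)$ and $G(z)=f^{(k)}(z)$; both are entire (since $f$ is entire), and assume $F\not\equiv G$. The first preparatory step is a proximity estimate: writing
\[
\frac{G(z)}{F(z)}=\frac{f^{(k)}(z)}{f(z)}\cdot\frac{f(z)}{f(z+c)},
\]
the standard logarithmic-derivative lemma gives $m(r,f^{(k)}/f)=S(r,f)$, and Lemma \ref{l6} (available because $\rho_2(f)<1$) gives $m(r,f(z)/f(z+c))=o(T(r,f))$. Multiplying, $m(r,G/F)=o(T(r,f))$. Since $F-G$ is entire,
\[
T(r,F-G)=m(r,F-G)\leq m(r,F)+m(r,1-G/F)+O(1)\leq T(r,F)+o(T(r,f)).
\]

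Next I would invoke the second main theorem for small targets (Lemma \ref{l4}) with three targets $\{a_1,a_2,\infty\}$ on each of $F$ and $G$. Since $F,G$ are entire, the $\infty$-counting terms vanish, producing, modulo $\varepsilon T(r,F)+S(r,F)$ errors,
\[
T(r,F)\leq \overline N(r,a_1;F)+\overline N(r,a_2;F),\qquad T(r,G)\leq \overline N(r,a_1;G)+\overline N(r,a_2;G).
\]
The IM hypothesis yields $\overline N(r,a_i;F)=\overline N(r,a_i;G)$, and the fact that $F-G$ vanishes at every shared $a_i$-point (the two shared sets being disjoint outside an exceptional set, since $a_1\not\equiv a_2$) gives the key inequality
\[
\overline N(r,a_1;F)+\overline N(r,a_2;F)\leq \overline N(r,0;F-G)\leq T(r,F-G).
\]
Combined with the proximity estimate from the first step this yields the tight chain $T(r,F)\leq T(r,F-G)+\varepsilon T(r,F)+S(r,F)\leq T(r,F)+o(T(r,f))$, forcing all of these to be essentially equalities. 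In particular, nearly every zero of $F-G$ must be a shared $a_i$-point, and almost all of these shared points must be simple for both $F$ and $G$.

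The chain is internally consistent, so one has to extract a genuine contradiction; this is the main obstacle. My plan is to introduce the auxiliary function
\[
\Psi(z)=\frac{F(z)-G(z)}{(F(z)-a_1(z))(F(z)-a_2(z))}=\frac{1}{a_2-a_1}\left(\frac{G-a_1}{F-a_1}-\frac{G-a_2}{F-a_2}\right),
\]
whose poles can occur only at shared $a_i$-points where the multiplicity $p_F$ of $F-a_i$ strictly exceeds the multiplicity $p_G$ of $G-a_i$, and at such a point $\Psi$ has pole of order $p_F-p_G$. A direct multiplicity count gives $N(r,\infty;\Psi)\leq N_1(r,a_1;F)+N_1(r,a_2;F)$, which can be bounded via Yamanoi's identity (Lemma \ref{l5}) together with the rigidity from the previous paragraph. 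Using the identity $F-G=\Psi(F-a_1)(F-a_2)$ and comparing $T\bigl((F-a_1)(F-a_2)\bigr)=2T(r,F)+o(T(r,f))$ with the upper bound $T(r,F-G)\leq T(r,F)+o(T(r,f))$ and the balanced-growth hypothesis $T(r,F)=T(r,G)+o(T(r,f))$, I would show that the cancellation forced on $\Psi$ by these combined constraints is incompatible with $F\not\equiv G$. The delicate point — and the reason the balanced-growth hypothesis is required — is to execute this cancellation argument so that every error term remains $o(T(r,f))$; the interplay between the Yamanoi bound on $N_1$, the small-function log-derivative and difference lemmas, and the equality of orders $T(r,F)\sim T(r,G)$ is what ultimately rules out a nontrivial $\Psi$ and forces $F\equiv G$.
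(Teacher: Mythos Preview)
Your opening paragraphs are correct and match the paper exactly: the estimate $m(r,G/F)=o(T(r,f))$, the inequality $T(r,F-G)\le T(r,F)+o(T(r,f))$, and the tight chain forcing $T(r,F)=\overline N(r,a_1;F)+\overline N(r,a_2;F)+o(T(r,f))$ are precisely what the paper establishes first.

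The gap is in the second half. Your auxiliary function $\Psi=(F-G)/\bigl((F-a_1)(F-a_2)\bigr)$ is not the one the paper uses, and the missing ingredient is decisive. The paper multiplies the numerator by the Wronskian-type factor
\[
\Delta(g)=(g-a_1)(a_1'-a_2')-(g'-a_1')(a_1-a_2),
\]
setting $\phi=\Delta(g)(g-f^{(k)})/\bigl((g-a_1)(g-a_2)\bigr)$. The point of $\Delta(g)$ is that each term $\Delta(g)/(g-a_i)$ is a combination of a bounded quantity and a logarithmic derivative $(g'-a_1')/(g-a_i)$, so the logarithmic derivative lemma forces $m(r,\phi)=o(T(r,f))$; together with $N(r,\phi)=o(T(r,f))$ one gets that $\phi$ is a genuine \emph{small function} of $f$. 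Your $\Psi$ has no such smallness: there is no mechanism to control $m(r,\Psi)$, and the ``cancellation'' you sketch from $T(r,F-G)\le T(r,F)$ versus $T\bigl((F-a_1)(F-a_2)\bigr)=2T(r,F)$ does not give a usable bound on $T(r,\Psi)$, since characteristics are only subadditive for products.

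Your invocation of Lemma~\ref{l5} is also misplaced here: Yamanoi's identity concerns $\sum_{a\in\hat{\mathbb C}}N_1(r,a;g)$ over \emph{constant} targets, so it yields nothing about $N_1(r,a_i;F)$ when the $a_i$ are non-constant small functions. The paper does not use Lemma~\ref{l5} in this lemma at all. Instead, once $\phi$ is known to be small, the paper inverts the relation to obtain $m(r,0;g)=o(T(r,f))$ and, by a second-main-theorem argument, $m(r,a_3;g)=o(T(r,f))$ for the auxiliary target $a_3=a_1+l(a_1-a_2)$; the contradiction then follows by repeating the machinery of Lemma~2.7 in \cite{MSS1}. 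None of these steps is visible in your outline, and they are where the balanced-growth hypothesis $T(r,F)=T(r,G)+o(T(r,f))$ is actually exploited. In short: the derivative factor $\Delta(g)$ and the consequent smallness of $\phi$ are the key ideas you are missing.
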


\begin{proof} We will prove Lemma \ref{l7} with the idea of proof of Lemma 2.7 \cite{MSS1}.
If possible suppose $f(z+c)\not\equiv f^{(k)}(z)$. Clearly by Lemma \ref{l6}, we have
\beas m\big(r,f^{(k)}(z)/f^{(k)}(z+c)\big)+m\big(r,f^{(k)}(z+c)/f^{(k)}(z)\big)=S(r,f^{(k)})\leq o(T(r,f))\eeas
for all $r\not\in E$ such that $l(E)<+\infty$ and so
\bea\label{al1} m\big(r,f^{(k)}(z)/f(z+c)\big)\leq m\big(r,f^{(k)}(z)/f(z)\big)+m\left(r,f(z)/f(z+c)\right)=o(T(r,f))\eea
for all $r\not\in E$ such that $l(E)<+\infty$. Let $g(z)=f(z+c)$. Obviously $g$ and $f^{(k)}$ share $a_1$ and $a_2$ IM. Now using (\ref{al1}), we get
\bea\label{al2} \sideset{}{_{i=1}^2}{\sum}\ol N(r,a_i;g)&\leq& N(r,0;g-f^{(k)})+o(T(r,f))\\
&\leq& T(r,g-f^{(k)})+o(T(r,f))\nonumber\\
&\leq& m(r,g-f^{(k)})+o(T(r,f))\nonumber\\
&\leq& m(r,g)+m(r, 1-f^{(k)}/g)+o(T(r,f))\nonumber\\
&\leq& T(r,g)+o(T(r,f))\nonumber,\eea
for all $r\not\in E$.
Also by Lemma \ref{l4}, we have $T(r,g)\leq \ol N(r,a_1;g)+\ol N(r,a_2;g)+o(T(r,f))$
and so from (\ref{al2}), we get
\bea\label{al3} T(r,g)= \ol N(r,a_1;g)+\ol N(r,a_2;g)+o(T(r,f)),\eea
for all $r\not\in E$.
Let $\Delta(g)=(g-a_1)(a_1^{(1)}-a_2^{(1)})-(g^{(1)}-a_1^{(1)})(a_1-a_2)$.
It is easy to verify that $\Delta(g)\not\equiv 0$. Since $g\not\equiv f^{(k)}$, so
\bea\label{al4} \phi=\frac{\Delta(g)\left(g-f^{(k)}\right)}{(g-a_1)(g-a_2)}\not\equiv 0.\eea

Also it is easy to prove that $N(r,\phi)=o(T(r,f))$. Note that 
\[\frac{\Delta(g)}{(g-a_1)(g-a_2)}=\frac{1}{a_1-a_2}\left[\frac{\Delta(g)}{g-a_1}-\frac{\Delta(g)}{g-a_2}\right]\;
\text{and}\; 
\frac{\Delta(g)g}{(g-a_1)(g-a_2)}=\frac{\Delta(g)}{g-a_1}+\frac{a_2\Delta(g)}{(g-a_1)(g-a_2)}.\]

Clearly
\bea\label{al6}m\left(r,\frac{\Delta(g)}{(g-a_1)(g-a_2)}\right)=o(T(r,f))\;\;
\text{and}\;\;
m\left(r,\frac{\Delta(g)g}{(g-a_1)(g-a_2)}\right)=o(T(r,f)),\eea
for all $r\not\in E$. Therefore
\beas T(r,\phi)=N(r,\phi)+m(r,\phi)\leq m\left(r,\frac{\Delta(g)g}{g-a_1)(g-a_2)}\right)
+m\left(r,1-\frac{f^{(k)}}{g}\right)+S(r,f)=o(T(r,f)),\eeas
for all $r\not\in E$, which shows that $\phi$ is a small function of $f$.
Also from (\ref{al4}), we have 
\beas \frac{1}{g}=\frac{\Delta(g)}{\phi (g-a_1)(g-a_2)}\left(1-\frac{f^{(k)}}{f}\frac{f}{g}\right)\eeas
and so using (\ref{al1}) and (\ref{al6}), we have $m(r,0;g)=o(T(r,f))$ for all $r\not\in E$.

\medskip
Let $a_3=a_1+l(a_1-a_2)$, where $l$ is a positive integer. If $F=(g-a_1)/(a_2-a_1)$,
then in view of the second fundamental theorem and using (\ref{al3}), we get
\beas 2T(r,g)=2T(r,F)&\leq& \ol N(r,F)+\ol N(r,0;F)+\ol N(r,1;F)+\ol N(r,-l;F)+o(T(r,f))\nonumber\\
&\leq& \ol N(r,a_1;g)+\ol N(r,a_2;g)+\ol N(r,a_3;g)+o(T(r,f))
\\&\leq& 2\;T(r,g)-m(r,a_3;g)+o(T(r,f))\nonumber\eeas
and so $m(r,a_3;g)=S(r,f)$ for all $r\not\in E$. 
Therefore
\bea\label{al9} m(r,0;g)=o(T(r,f))\;\;\text{and}\;\;m(r,a_3;g)=o(T(r,f))\eea
for all $r\not\in E$.
Now proceeding in the same way as done in the proof of Lemma 2.7 \cite{MSS1}, we get a contradiction. 
Hence $g\equiv f^{(k)}$, i.e., $f(z+c)\equiv f^{(k)}(z)$. This completes the proof.
\end{proof}

\begin{lem}\label{l8}\cite{AS1} Let $f$ and $g$ be two non-constant polynomials, and let $a$ and $b$ be two distinct finite values. If $f$ and $g$ share $a$ and $b$ IM, then $f\equiv g$.
\end{lem}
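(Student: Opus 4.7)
The plan is to argue by contradiction through a clean degree-counting argument, using the fact that for polynomials one can control critical points.

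Suppose toward a contradiction that $f\not\equiv g$. Set $n=\deg f$ and $m=\deg g$. First I would let $\{z_1,\ldots,z_p\}$ be the distinct zeros of $f-a$; by the IM sharing assumption, these are exactly the distinct zeros of $g-a$. Similarly let $\{w_1,\ldots,w_q\}$ be the common distinct zeros of $f-b$ and $g-b$. Write $\alpha_i$ for the multiplicity of $z_i$ as a zero of $f-a$ and $\gamma_j$ for the multiplicity of $w_j$ as a zero of $f-b$, so that $\sum_i\alpha_i=\sum_j\gamma_j=n$.

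Next I would bound $p+q$ from below by counting critical points. Since $f'$ vanishes at each $z_i$ with multiplicity at least $\alpha_i-1$ and at each $w_j$ with multiplicity at least $\gamma_j-1$, and since $a\neq b$ forces the sets $\{z_i\}$ and $\{w_j\}$ to be disjoint, we obtain
\[
\sum_{i=1}^{p}(\alpha_i-1)+\sum_{j=1}^{q}(\gamma_j-1)=2n-p-q\leq \deg f'=n-1,
\]
which yields $p+q\geq n+1$. The analogous argument applied to $g'$ gives $p+q\geq m+1$, hence $p+q\geq \max(n,m)+1$.

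For the upper bound, I would consider $h=f-g$. At each $z_i$ we have $f(z_i)=a=g(z_i)$, so $h(z_i)=0$; likewise $h(w_j)=0$. Thus $h$ has at least $p+q$ distinct zeros. Since $f\not\equiv g$, $h$ is a nonzero polynomial of degree at most $\max(n,m)$, forcing $p+q\leq \max(n,m)$. Combining this with the lower bound produces $\max(n,m)+1\leq \max(n,m)$, a contradiction. Hence $f\equiv g$.

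There is no real obstacle here; the only point that needs a moment of care is checking that the $\{z_i\}$ and $\{w_j\}$ are disjoint (which is immediate because $a\neq b$) so that the critical-point multiplicities add without overlap, and verifying that $h\not\equiv 0$ is exactly the contradiction hypothesis.
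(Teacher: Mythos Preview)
Your argument is correct and is precisely the classical degree-counting proof due to Adams and Straus \cite{AS1}, which the paper cites without reproducing. The paper gives no proof of its own for this lemma, so there is nothing further to compare.
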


\section {{\bf Proof of Theorem \ref{t1}}} 
\begin{proof}
By the given conditions, $f^{(k)}(z)$ and $f(z+c)$ share $a_1$ and $a_2$ IM.  

\medskip
First we suppose that $f$ is a non-constant polynomial. We know that a small function of a polynomial must be a constant. Therefore $a_1$ and $a_2$ are constants. Clearly $f^{(k)}(z)$ and $f(z+c)$ are also non-constant polynomials. Now by Lemma \ref{l8}, we have $f^{(k)}(z)\equiv f(z+c)$, which contradicts the fact that $f$ is a non-constant polynomial.

\medskip
Next we suppose that $f$ is a transcendental entire function. Let $g(z)=f(z+c)$. Now from the proof of Lemma \ref{l7}, we see that $\Delta(g)\not\equiv 0$. We consider the auxiliary $\phi$ defined by (\ref{al4}).

\smallskip
Now we divide following two cases.

\smallskip
{\bf Case 1.} Let $\phi\equiv 0$. Clearly $g\equiv f^{(k)}$, i.e., $f(z+c)\equiv f^{(k)}(z)$.

\smallskip
{\bf Case 2.} Let $\phi\not\equiv 0$. Obviously $g\not\equiv f^{(k)}$ and from the proof of Lemma \ref{l7}, we see that $\phi$ is a small function of $f$. Here we use the results obtained in (\ref{al3}) and (\ref{al9}), which are irrespective of the relation $T(r,g)=T(r,f^{(k)})+o(T(r,f))$. Now rewriting (\ref{al4}), we get
\bea\label{e3.1} \displaystyle g^{(1)}(g-f^{(k)})=\alpha_{1,2}g^2+\alpha_{1,1}g+\alpha_{1,0}+Q_1,\eea
where
\beas \alpha_{1,2}=\frac{a_1^{(1)}-a_2^{(1)}-\phi}{a_1-a_2},\;
\alpha_{1,1}=a_1^{(1)}-a_1\frac{a_1^{(1)}-a_2^{(1)}}{a_1-a_2}+\frac{(a_1+a_2)\phi}{a_1-a_2},\;
\alpha_{1,0}=-\frac{\phi a_1 a_2}{a_1-a_2}\eeas
and
\beas Q_1=-(a_1^{(1)}-a_2^{(1)})gf^{(k)}/(a_1-a_2)-\left(a_1^{(1)}-a_1(a_1^{(1)}-a_2^{(1)})/(a_1-a_2)\right)f^{(k)}.\eeas

\smallskip
Now we consider following two sub-cases.

\smallskip
{\bf Sub-case 2.1.} Let $\phi\not\equiv a_1^{(1)}-a_2^{(1)}$. Certainly $\alpha_{1,2}\not\equiv 0$. Now differentiating (\ref{e3.1}) and using it repeatedly, we get
\bea\label{e3.3} \displaystyle g^{(k)}(g-f^{(k)})^{2k-1}=\sideset{}{_{j=0}^{2k}}{\sum}\alpha_{k,j}g^j+Q_k,\eea
where
\bea\label{e3.3a}\displaystyle Q_k=\sum\limits_{l<2k,l+j_1+\ldots+j_k\leq 2k}\beta_{l, j_1, j_2,\ldots,j_k}g^{l}(f^{(k)})^{j_1}(f^{(k+1)})^{j_2}\ldots (f^{(2k-1)})^{j_k}.\eea

\smallskip
Obviously $\alpha_{k,j}$ and $\beta_{l, j_1, j_2,\ldots,j_k}$ are small functions of $f$. If we take $\psi_i:=\alpha_{i,2i}$, then
$\psi_1=\alpha_{1,2}$ and $\psi_{i+1}=\psi_i^{(1)}+\psi_1 \psi_i$, where $i=1,2,\ldots,k-1$. Also we have
$\psi_k=\psi_1^k+Q(\psi_1)$, where $Q(\psi_1)$ is a differential polynomial in $\psi_1$ with a degree less than or equal $k-1$.

\smallskip
Now we divide following two sub-cases.

\smallskip
{\bf Sub-case 2.1.1.} Let $\psi_k=\alpha_{k,2k}\not\equiv 0$. Then from (\ref{e3.3}), we have
\bea\label{e3.6} \displaystyle \sideset{}{_{j=0}^{2k}}{\sum}\alpha_{k,j}g^j=g^{(k)}(g-f^{(k)})^{2k-1}-Q_k.\eea

\medskip
Using Lemma \ref{l6} and (\ref{al9}) to (\ref{e3.3a}), we get 
\bea\label{e3.6a} m(r,Q_k/g^{2k-1}g^{(k)})=o(T(r,f)).\eea

\smallskip
Now using Mohon'ko lemma \cite{6}, Lemma \ref{l6} and (\ref{e3.6a}) to (\ref{e3.6}), we get
\beas 2k T(r,g)&=&T\left(r,\sideset{}{_{j=0}^{2k}}{\sum}\alpha_{k,j}g^j\right)+o(T(r,f))\\
&\leq& (2k-1) m\left(r, 1-(f^{(k)}/f)(f/g)\right)+m(r,Q_k/g^{2k-1}g^{(k)})
+m(r,g^{2k-1})\\&&+m(r,f^{(k)})+m(r,g^{(k)}/f^{(k)})+o(T(r,f))\\&
\leq& (2k-1)T(r,g)+T(r,f^{(k)})+o(T(r,f)),\eeas
\;\;i.e., $T(r,g)\leq T(r,f^{(k)})+o(T(r,f))$.
Since $f$ is entire, using Lemma \ref{l6}, we have
\bea\label{ssr} T(r,f^{(k)})=m(r,f^{(k)})=m\left(r,\frac{f^{(k)}}{f}\frac{f}{g}g\right)
\leq m(r,g)+o(T(r,f))=T(r,g)+o(T(r,f)).\eea

Consequently $T(r,g)=T(r,f^{(k)})+o(T(r,f))$ and so by Lemma \ref{l7}, we have $g\equiv f^{(k)}$, which is impossible.

\smallskip
{\bf Sub-case 2.1.2.} Let $\psi_k=\alpha_{k,2k}\equiv 0$. Now proceeding similarly as done in the proof of Sub-case 1.1.2 of Theorem 1.3 \cite{MSS1}, we get a contradiction.

\smallskip
{\bf Sub-case 2.2.} Let $\phi\equiv a_1^{(1)}-a_2^{(1)}$. If $a_1$ and $a_2$ are constants, then $\phi\equiv 0$, which is impossible. Hence atleast one of $a_1$ and $a_2$ is non-constant. Let $\lbrace d_1,d_2,\ldots, d_p\rbrace\subset \mathbb{C}$ such that $d_i\neq a_j$, where $i=1,2,\ldots, p$ and $j=1,2$. Now in view of (\ref{al3}) and using Lemma \ref{l4}, we get 
\beas (p+1-\varepsilon/4)T(r,g)&\leq& \ol N(r,g)+\ol N(r,a_1;g)+\ol N(r,a_2;g)+\sideset{}{_{i=1}^p}{\sum}\ol N(r,d_i;g)+(\varepsilon/4) T(r,g)\\
&\leq &(p+1)T(r,g)-\sideset{}{_{i=1}^p}{\sum} m(r,d_i;g)+(\varepsilon/2)T(r,g),\eeas
for all $\varepsilon>0$ and for all $r\not\in E_1$ such that $\int_{E_1}d \log \log r<\infty$ and so
\bea\label{bm2} \sideset{}{_{i=1}^p}{\sum} m(r,d_i;g)<(\varepsilon/2) T(r,g)\eea
for all $\varepsilon>0$ and for all $r\not\in E_1$.
 
We consider following sub-cases.

\smallskip
{\bf Sub-case 2.2.1.} Let $a_1$ and $a_2$ be non-constant small functions of $f$. Set 
\[S_a=\left\lbrace z\in\mathbb{C}: g(z)=a\;\;\text{and}\;\;g^{(1)}(z)=0\right\rbrace\;\;\text{and}\;\;S_{g^{(1)}}=\left\lbrace z\in\mathbb{C}: g^{(1)}(z)=0\right\rbrace.\]

Obviously
\[\sideset{}{_{a\in\mathbb{C}}}{\bigcup} S_a\subset S_{g^{(1)}}\]
and the set $S_{g^{(1)}}$ is countable. Consequently the set $\sideset{}{_{a\in\mathbb{C}}}{\bigcup} S_a$ is also countable. Clearly there exists a countable set $S\subset \mathbb{C}$ such that $\sideset{}{_{a\in S}}{\bigcup} S_a=\sideset{}{_{a\in\mathbb{C}}}{\bigcup} S_a$. The set may be finite or infinite. For the sake of simplicity we may assume that the set $S$ is infinite. Let $S=\{b_1,b_2,\ldots, b_n,\ldots\}$.

Now in view of (\ref{al3}) and using Lemma \ref{l4}, we get
\bea\label{bmm1} \left(q+1-\frac{\varepsilon}{4}\right)T(r,g)&\leq& \ol N(r,g)+\ol N(r,a_1;g)+\ol N(r,a_2;g)+\sideset{}{_{i=1}^q}{\sum}\ol N(r,b_i;g)+\frac{\varepsilon}{4}T(r,g)\nonumber\\&=&T(r,g)+\sideset{}{_{i=1}^q}{\sum}\ol N(r,b_i;g)+\frac{\varepsilon}{4}T(r,g),\eea
for all $\varepsilon>0$ and for all $r\not\in E_2$ such that $\int_{E_2}d \log \log r<\infty$. Clearly (\ref{bmm1}) yields
\[q\;T(r,g)\leq \sideset{}{_{i=1}^q}{\sum}\ol N(r,b_i;g)+(\varepsilon/2) T(r,g)\]
for all $r\not\in E_2$. By the first fundamental theorem, we get
\[\sideset{}{_{i=1}^q}{\sum} N(r,b_i;g)+\sideset{}{_{i=1}^q}{\sum} m(r,b_i;g)+O(1)\leq \sideset{}{_{i=1}^q}{\sum}\ol N(r,b_i;g)+(\varepsilon/2)T(r,g)\]
for all $r\not\in E_2$ and so by (\ref{bm2}), we conclude that
\bea\label{bmm2}\sideset{}{_{i=1}^q}{\sum} N_1(r,b_i;g)<(\varepsilon/2)T(r,g)\eea
for all $\varepsilon>0$ and for all $r\not\in E_1\cup E_2$. Since (\ref{bmm2}) holds for any finite $q$, we deduce that  
\bea\label{bmm3}\label{bm4} \sideset{}{_{a\in\mathbb{C}}}{\sum}N_1(r,a;g)=\sideset{}{_{i=1}^{\infty}}{\sum} N_1(r,b_i;g)<(\varepsilon/2) T(r,g)\eea
for all $\varepsilon>0$ and for all $r\not\in E_1\cup E_2$.
Let $\nu: \mathbb{R}_{>e} \to \mathbb{N}_{>0}$ be a function such that
\[\nu(r) \sim \left(\log^+\frac{T_g(r,\Omega)}{\log r}\right)^{20}.\]

By (\ref{sn3}), we know that $T(r,g)=T_g(r,\Omega)+O(1)$. Since $g$ is a transcendental, we have
\[\nu(r) \sim \left(\log^+\frac{T_g(r,\Omega)}{\log r}\right)^{20}=o(T(r,g)).\]

Let $q=v(r)$ be a positive integer. Then for $(c_1, c_2, \dots, c_q)\in (\hat {\mathbb{C}})^q$, we get from (\ref{sn4}) that
\bea\label{bm6}\frac{1}{2\pi} \int_0^{2\pi} \max_{1 \leq j \leq q} \log \left( \frac{1}{\chi(g(re^{i\theta}), c_j)} \right) \, d\theta\leq \sideset{}{_{j=1}^q}{\sum} m(r,c_j;g)+O(1).\eea

Clearly from (\ref{bm2}) and (\ref{bm6}), we get
\[\frac{1}{2\pi} \int_0^{2\pi} \max_{1 \leq j \leq q} \log \left( \frac{1}{\chi(g(re^{i\theta}), b_j)} \right) \, d\theta\leq m(r,g)+(\varepsilon/2)T(r,g)\]
and so
\bea\label{bm7}\ol{m}_{0,q}(r,g)&=&\sup_{(c_1, c_2, \dots, c_q)\in (\hat {\mathbb{C}})^q} \frac{1}{2\pi} \int_0^{2\pi} \max_{1 \leq j \leq q} \log \left( \frac{1}{\chi(g(re^{i\theta}), c_j)} \right) \, d\theta
\\&\leq& m(r,g)+(\varepsilon/2)T(r,g)
\nonumber\eea
for all $\varepsilon>0$. Now by Lemma \ref{l5}, we have 
\bea\label{bm8} \ol{m}_{0,q}(r,g)+\sideset{}{_{a \in \mathbb{\hat C}}}{\sum} N_1(r,a;g)=2T(r,g)+o(T(r,g)),\eea
for all $r\not\in E_3$ such that $\text{log dens}\;E_3=0$.
Let $E=E_1\cup E_2\cup E_3$. Then $\text{log dens}\;E=0$. Since $f$ is an entire function, from (\ref{bm4}), (\ref{bm7}) and (\ref{bm8}), we get
\beas 2T(r,g)&=&\ol m_{0,q}(r,g)+\sideset{}{_{a\in\mathbb{C}}}{\sum} N_1(r,a;g)+o(T(r,g))\\
&\leq & m(r,g)+\varepsilon T(r,g)+o(T(r,g))\\
&\leq & T(r,g)++\varepsilon T(r,g)+o(T(r,g)),\eeas
for all $\varepsilon>0$ and for all $r\not\in E$. Clearly $T(r, g)=o(T(r, g))$, for all $r\not\in E$. So we get a contradiction.

\smallskip
{\bf Sub-case 2.2.2.} Let $a_1$ be a non-constant small function and $a_2$ be a finite complex number. Then from (\ref{bm4}), we get
\bea\label{bm8a} \sideset{}{_{a\in\mathbb{C}\backslash \{a_2\}}}{\sum}N_1(r,a;g)<(\varepsilon/2) T(r,g)\eea
for all $0<\varepsilon>0$ and for all $r\not\in E_1\cup E_2$.
Let $q=v(r)$ be a positive integer. Then for $(c_1, c_2, \dots, c_q)\in (\hat {\mathbb{C}})^q$, we get from (\ref{bm2}) and (\ref{bm6})  that
\[\frac{1}{2\pi} \int_0^{2\pi} \max_{1 \leq j \leq q} \log \left( \frac{1}{\chi(g(re^{i\theta}), c_j)} \right) \, d\theta\leq m(r,g)+m(r,a_2;g)+(\varepsilon/2)T(r,g)\]
and so
\bea\label{bm11}\ol{m}_{0,q}(r,g)&=&\sup_{(c_1, c_2, \dots, c_q)\in (\hat {\mathbb{C}})^q} \frac{1}{2\pi} \int_0^{2\pi} \max_{1 \leq j \leq q} \log \left( \frac{1}{\chi(g(re^{i\theta}), c_j)} \right) \, d\theta
\\&\leq& m(r,g)+m(r,a_2;g)+(\varepsilon/2)T(r,g).\nonumber\eea

Now from (\ref{bm8})-(\ref{bm11}), we get 
\beas 2T(r,g)&=&\ol{m}_{0,q}(r,g)+\sideset{}{_{a\in\mathbb{C}}}{\sum} N_1(r,a;g)+o(T(r,g))\\
&\leq& m(r,g)+m(r,a_2;g)+N_1(r,a_2;g)+\varepsilon T(r,g)+o(T(r,g))\\
&\leq & 2T(r,g)-\ol N(r,a_2;g)+\varepsilon T(r,g)+o(T(r,g)),\eeas
for all $\varepsilon>0$ and for all $r\not\in E$, which shows that $\ol N(r,a_2;g)=o(T(r,g))$ for all $r\not\in E$.

Note that $\ol N(r,a_1;g)=\ol N(r,a_1;f^{(k)})+o(T(r,g))$. Now in view of (\ref{al3}) and using the first fundamental theorem, we get
\beas T(r,g)\leq \ol N(r,a_1;g)+o(T(r,g))\leq T(r,f^{(k)})+o(T(r,g)).\eeas

Consequently from (\ref{ssr}), we get $T(r,g)=T(r,f^{(k)})+o(T(r,f))$ and so by Lemma \ref{l7}, we have $g\equiv f^{(k)}$, which is impossible.

\smallskip
{\bf Sub-case 2.2.3.} Let $a_2$ be a non-constant small function and $a_1$ be a finite complex number. Now proceeding similarly as done in the proof of Sub-case 2.2.2, we get a contradiction.
Hence the proof.
\end{proof}

\section {{\bf Proof of Theorem \ref{t2}}}
\begin{proof} We prove Theorem \ref{t2} with the line of proof of Theorem \ref{t1}, where $g(z)=f(z)$ and we use Lemma 2.7 \cite{MSS1} instead of Lemma \ref{l7}. So we omit the detail.
\end{proof}

\medskip
{\bf Compliance of Ethical Standards:}

\medskip
{\bf Conflict of Interest.} The authors declare that there is no conflict of interest regarding the publication of this paper.

\medskip
{\bf Data availability statement.} Data sharing not applicable to this article as no data sets were generated or analysed during the current study.


\begin{thebibliography}{99}
\bibitem{AS1} W. W. Adams and E. G. Straus, Non-Archimedian analytic functions taking the same values at the same points, Ill. J. Math., 15 (1971), 418-424.

\bibitem{CC2} C. X. Chen and Z. X. Chen, Entire functions and their high order differences, Taiwanese J. Math. 18 (3) (2014), 711-729.

\bibitem{CF1} Y. M. Chiang and S. J. Feng, On the Nevanlinna characteristic of $f(z +\eta)$ and difference equations in the complex plane, Ramanujian J., 16 (1) (2008), 105-129.


\bibitem{FLSY} M. L. Fang, H. Li, W. Shen and X. Yao, A difference version of the Rubel-Yang-Mues-Steinmetz-Gundersen theorem, Comput. Methods Funct. Theory, (2023). https://doi.org/10.1007/s40315-023-00510-7.

\bibitem{GF1} G. Frank, Lecture notes on sharing values of entire and meromorphic functions, Workshop in Complex Analysis at Tianjing, China, 1991.

\bibitem{GGG1} G. G. Gundersen, Meromorphic functions that share finite values with their derivative, J. Math. Anal. Appl., 75 (1980), 441-446; Correction, 86 (1982), 307.

\bibitem{HKT1} R. Halburd, R. Korhonen and K. Tohge, Holomorphic curves with shift-invariant hyperplane preimages, Trans. Amer. Math., Soc., 366 (2014), 4267-4298.

\bibitem{HKLRZ} J. Heittokangas, R. Korhonen, I. Laine, J. Rieppo, and J. L. Zhang, Value sharing results for shifts of meromorphic functions and sufficient conditions for periodicity, J. Math. Anal. Appl., 355 (1) (2009), 352-363.

\bibitem{HKLR1} J. Heittokangas, R. Korhonen, I. Laine and J. Rieppo, Uniqueness of meromorphic functions sharing values with their shifts, Complex Var. Elliptic Equ., 56 (1-4) (2011), 81-92.

\bibitem{WKH1} W. K. Hayman, Meromorphic functions, Clarendon Press, Oxford (1964).

\bibitem{HF1} X. H. Huang and M. L. Fang, Unicity of entire functions concerning their shifts and derivatives, Comput. Methods Funct. Theory, 21 (2021), 523-532.

\bibitem{XHH1} X. H. Huang, Meromorphic functions that share three small functions with their $k$th order derivatives, Rev. Real Acad. Cienc. Exactas Fis. Nat. Ser. A-Mat. (2025) 119:48 https://doi.org/10.1007/s13398-025-01717-0.

\bibitem{LY1} P. Li and C. C. Yang, When an entire function and its linear differential polynomial share two values, Illinois J. Math., 44 (2) (2000), 349-362.
  
\bibitem{11} K. Liu and X. J. Dong, Some results related to complex differential-difference equations of certain types, Bull. Korean Math. Soc., 51 (2014), 1453-1467.

\bibitem{LXY1} F. L\"{u}, J. F. Xu and H. X. Yi, Uniqueness theorems and normal families of entire functions and their derivatives, Annales Polonici Mathematici, 95 (1) (2009), 67-75.

\bibitem{MSS1} S. Majumder, J. Sarkar and N. Sarkar, On a conjecture of Li and Yang, Hiroshima Math. J., 53 (2023), 199-223.

\bibitem{MP1} S. Majumder and D. Pramanik, On the conjecture of Chen and Yi, Houston J. math., 49 (3) (2023), 509-530.

\bibitem{MSP1} S. Majumder, N. Sarkar and D. Pramanik, Entire functions and their high order difference operators, J. Contemp. Mathemat. Anal., 58 (6) (2023), 405-415.

\bibitem{6} A. Z. Mohon'ko, On the Nevanlinna characteristics of some meromorphic functions, Theory of Functions. Functional Analysis and Their Applications, 14 (1971), 83-87.

\bibitem{MS1} E. Mues and N. Steinmetz, Meromorphe Funktionen, die mit ihrer Ableitung Werte teilen, Manuscripta Math., 29 (1979), 195-206.

\bibitem{QLY1} X. G. Qi, N. Li and L. Z. Yang, Uniqueness of meromorphic functions concerning their differences and solutions of difference Painlev$\acute{E}$ equations, Comput. Methods Funct. Theory, 18 (2018), 567-582.

\bibitem{QY1}  X. G. Qi and L. Z. Yang, Uniqueness of Meromorphic Functions Concerning their Shifts and Derivatives, Comput. Methods Funct. Theory, 20 (1) (2020), 159-178.

\bibitem{GQ1} G. Qiu, Uniqueness of entire functions that share some small functions. Kodai Math. J., 23 (1) (2000), 1-11.

\bibitem{RY1} L. A. Rubel and C. C. Yang, Values shared by an entire function and its derivative, Lecture Notes in Mathematics, Springer-Verlag, Berlin, 599 (1977), 101-103.

\bibitem{SS1} A. Sauer and A. Schweizer, A uniqueness problem concerning entire functions and their derivatives, Comput. Methods Funct. Theory, 24 (2024), 163-183.

\bibitem{KY1} K. Yamanoi, The second main theorem for small functions and related problems, Acta Math, 192 (2004), 225-294.

\bibitem{KY2} K. Yamanoi, Zeros of higher derivatives of meromorphic functions in the complex plane. Proc London Math Soc, 106 (2013), 703-780.

\bibitem{YI1} C. C. Yang and H. X. Yi, Uniqueness theory of meromorphic functions, Kluwer Academic Publishers, Dordrecht/Boston/London, 2003.


\bibitem{LZY1} L. Z. Yang, Entire functions that share finite values with their derivatives. Bull. Aust. Math. Soc., 41 (1990), 337-342.

\bibitem{ZW1} J. H. Zheng and S. P. Wang, On unicity of meromorphic functions and their derivatives, Adv. in Math (China), 21 (3) (1992), 334-341.

\end{thebibliography}
\end{document}